\newtheorem{theorem}{Theorem}[section]
\newtheorem{proposition}[theorem]{Proposition}
\newtheorem{definition}{Definition}
\newtheorem{remark}{Remark}
\DeclareMathOperator{\divergenceOperator}{div}
\DeclareMathOperator{\cycl}{Cycl}
\newcommand{\sgn}{\mathop{\mathrm{sgn}}}
\newcommand{\includegraph}[2][]{\ifnum\pdfoutput=0\includegraphics[#1]{#2.eps}\else\includegraphics[#1]{#2.pdf}\fi}
\author[1]{Renato Huzak
\footnote{Corresponding author, {\tt renato.huzak@uhasselt.be}}}
\author[2]{Kristian Uldall Kristiansen}
\author[3]{Goran Radunovi\'{c}}
\affil[1]{Hasselt University, Campus Diepenbeek, Agoralaan Gebouw D, 3590 Diepenbeek, Belgium}
\affil[2]{Department of Applied Mathematics and Computer Science, Technical University of Denmark, 2800 Kgs. Lyngby, Denmark}
\affil[3]{University of Zagreb, Faculty of Science, Horvatovac 102a, 10000 Zagreb, Croatia}
\title{Slow divergence integral in regularized piecewise smooth systems}
\date{}
\begin{document}
\maketitle

\begin{abstract}
In this paper we define the notion of slow divergence integral along sliding segments in regularized planar piecewise
smooth systems. The boundary of such segments may contain diverse tangency points. We show that the slow divergence integral is invariant under smooth equivalences. This is a natural generalization of the notion of slow divergence integral along normally hyperbolic portions of curve of singularities in smooth planar slow-fast systems. We give an interesting application of the integral in a model with visible-invisible two-fold of type \textbf{$VI_3$}. It is related to a connection between so-called Minkowski dimension of bounded and monotone ``entry-exit" sequences and the number of sliding limit cycles produced by so-called canard cycles.
\end{abstract}
\textit{Keywords:} limit cycles; piecewise smooth systems; regularization function; slow divergence integral; Minkowski dimension \newline
%\textit{2020 Mathematics Subject Classification:} 34E15, 34E17, 34C40

\tableofcontents

\section{Introduction}\label{Section-Introduction}
The notion of slow divergence integral \cite[Chapter 5]{DDR-book-SF} has proved to be an important tool to study lower and upper bounds of limit cycles in smooth planar slow-fast systems (see e.g. \cite{DDMoreLC,DDR-book-SF,SDICLE1,DPR,1996} and references therein).
In this paper by ``smooth'', we mean differentiable of class $C^\infty$. One of the main goals of this paper is to define the slow divergence integral in regularized planar piecewise smooth (PWS) systems with sliding and to prove its invariance under smooth equivalences (by smooth equivalence we mean smooth coordinate change and a multiplication by a smooth positive function). This is a natural generalization of \cite{RHKK} where the slow divergence integral is defined only for a PWS two-fold bifurcation of type visible-invisible called $VI_3$ and the switching boundary is a straight line (for more details about two-fold singularity $VI_3$ see \cite{Kuznetsov2003} and Sections \ref{section2general} and \ref{sectionVI3}). In this paper we define the slow divergence integral along sliding segments with a regular sliding vector field \cite{filippov1988differential} (Section \ref{sectionSDI-invariance}), and extend it to tangency points when only one vector field is tangent to switching boundary (Section \ref{section-SDI1Tang}), two-fold singularities of sliding type $VV_1$, $II_1$, $VI_2$ and $VI_3$ \cite{Kuznetsov2003}, and to a visible-invisible two-fold singularity when the sliding vector field vanishes at the two-fold point (Section \ref{section-SDI2Fold}).

\smallskip

Consider a smooth planar slow-fast system  
$$X_{\epsilon,\lambda}=f_\lambda Y_\lambda+\epsilon Q_\lambda+O(\epsilon^2)$$
defined on open set $V\subset \mathbb R^2$,
where $0<\epsilon\ll 1$ is the singular perturbation parameter, $\lambda\sim\lambda_0\in\mathbb R^l$, $f_\lambda$ is a smooth family of functions and $Y_\lambda$ and $Q_\lambda$ are  smooth families of vector fields. We suppose that $X_{0,\lambda}$ has a curve of singularities $C_\lambda$ for all $\lambda\sim\lambda_0$ (Fig. \ref{fig:fastfoliation}). We further assume that $\nabla f_\lambda(p)\ne (0,0)$ for all $p\in \{(x,y)\in V \ | \ f_\lambda
(x,y)=0\}$ and that $Y_\lambda$ has no singularities for each $\lambda\sim\lambda_0$. Then we have $C_\lambda=\{f_\lambda=0\}$ and $C_\lambda$ is a smooth family of $1$-dimensional manifolds. 
\smallskip

The orbits of the flow of $X_{0,\lambda}$ are located inside the leaves of  a smooth $1$-dimensional foliation $\mathcal F_\lambda$ on $V$ tangent to $Y_\lambda$ ($\mathcal F_\lambda$ is called the fast foliation of $X_{0,\lambda}$). If $p\in C_\lambda$, then $DX_{0,\lambda}(p)$ has one eigenvalue equal to zero, with eigenspace $T_pC_\lambda$, and the other one equal to $\divergenceOperator X_{0,\lambda}(p)$ (i.e., the trace of $DX_{0,\lambda}(p)$) with eigenspace $T_pl_{\lambda,p}$ ($l_{\lambda,p}$ is the leaf through $p$). We say that $p\in C_\lambda$ is normally hyperbolic if $\divergenceOperator X_{0,\lambda}(p)\ne 0$ (attracting when $\divergenceOperator X_{0,\lambda}(p)<0$ and repelling when $\divergenceOperator X_{0,\lambda}(p)>0$). We can define the notion of slow vector field on normally hyperbolic segments of $C_\lambda$. Let $p\in C_\lambda$ be a  normally hyperbolic singularity and let $\bar Q_\lambda(p)\in T_pC_\lambda$ be the projection of $Q_\lambda(p)$ on $T_pC_\lambda$ in the direction of $T_pl_{\lambda,p}$. $\bar Q_\lambda$ is called the slow vector field and its flow the slow dynamics. The time variable of the slow dynamics is
the slow time $\bar t = \epsilon t$ where $t$ is the time variable of the flow of $X_{\epsilon,\lambda}$. We point out that the classical definition of the slow vector field using center manifolds is equivalent to this definition. For more details see \cite[Chapter 3]{DDR-book-SF}.
\begin{figure}[htb]
	\begin{center}
		\includegraphics[width=5.4cm,height=3.4cm]{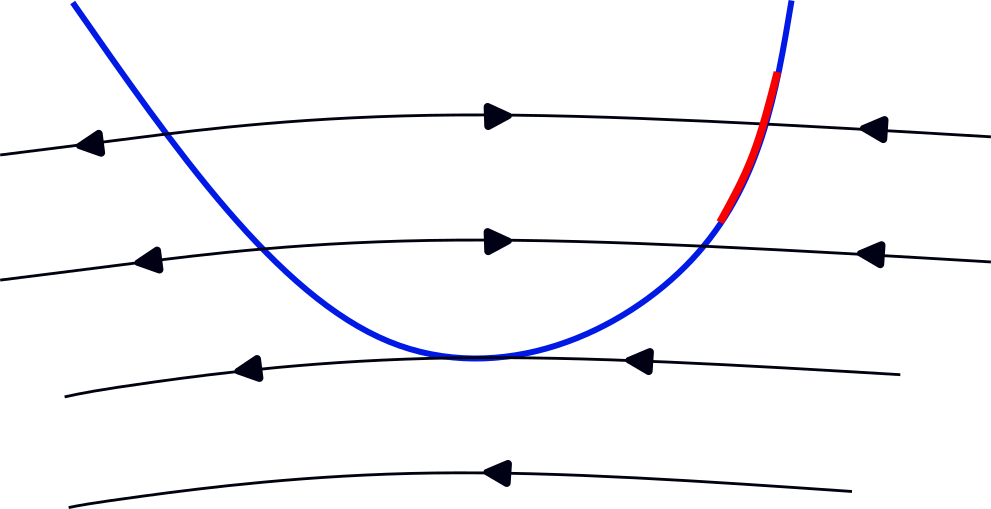}
		{\footnotesize
\put(-52,65){$m_{\lambda}$}
\put(-138,95){$C_{\lambda}$}
\put(-39,18){$\mathcal F_\lambda$}
  }
         \end{center}
	\caption{The dynamics of $X_{0,\lambda}$ with the curve of singularities $C_\lambda$ (blue) and the fast foliation $\mathcal F_\lambda$. A normally hyperbolic segment $m_\lambda\subset C_\lambda$ (red) along which the slow divergence integral can be defined. }
	\label{fig:fastfoliation}
\end{figure}

We define now the notion of slow divergence integral (see \cite[Chapter 5]{DDR-book-SF}). If $m_\lambda\subset C_\lambda$ is a normally hyperbolic segment not containing singularities of  $\bar Q_\lambda$, then the slow divergence integral along $m_\lambda$ is defined by
\begin{equation}
    \label{SDI-classicaldef}
    I(m_\lambda)=\int_{\bar t_1}^{ \bar t_2}\divergenceOperator X_{0,\lambda}(z_\lambda(\bar t))d \bar t
\end{equation}
where $z_\lambda:[\bar t_1,\bar t_2]\to\mathbb R^2$, $z_\lambda'(\bar t)=\bar Q_\lambda(z_\lambda(\bar t))$ and $z_\lambda(\bar t_1)$ and $z_\lambda(\bar t_2)$ are the end points of $m_\lambda$ (we parameterize $m_\lambda$ by $\bar t$). This definition is independent of the choice of parameterization $z_\lambda$ of $m_\lambda$ and the slow divergence integral is invariant under smooth equivalences (see \cite[Section 5.3]{DDR-book-SF}).
\smallskip

If both eigenvalues of the linear part of $X_{0,\lambda}$ at $p\in C_\lambda$ are zero, then we say that $p$ is a (nilpotent) contact point between $C_\lambda$ and $\mathcal F_\lambda$. The slow divergence integral can also be defined along parts of $C_\lambda$ that contain contact points, using its invariance under smooth equivalences and normal forms near contact points (see \cite[Section 5.5]{DDR-book-SF}).
\smallskip

We come now to a natural question: can we define the notion of slow divergence integral if we replace the slow-fast system $X_{\epsilon,\lambda}$ with a regularized planar PWS system? In Section \ref{section2general} we give a positive answer to the question. Instead of $X_{0,\lambda}$ we consider a $\lambda$-family of planar PWS systems \eqref{zZpminvariance} defined in Section \ref{section2general}. The switching boundary $\Sigma_\lambda$ defined after \eqref{zZpminvariance} plays the role of the curve of singularities $C_\lambda$, and the Filippov sliding vector field $Z_\lambda^{sl}$ on sliding subsets of $\Sigma_\lambda$ (see  \eqref{FilippovSVFGeneral}) plays the role of the slow vector field $\bar Q_\lambda$ on normally hyperbolic portions of $C_\lambda$ (see Proposition \ref{prop-sliding}). The function that will be integrated (Definition \ref{definition-SDIGen} in Section \ref{sectionSDI-invariance}) is the divergence of a smooth slow-fast vector field visible in the scaling chart of a cylindrical blow-up applied to regularized PWS system \eqref{regul-gener} (for more details see \cite{RHKK} and the proof of Proposition \ref{prop-sliding}). The notion of slow divergence integral in the PWS setting is well-defined when the sliding vector field $Z_\lambda^{sl}$ has no singularities (see Definition \ref{definition-SDIGen}). 
\smallskip

We show that the slow divergence integral from Definition \ref{definition-SDIGen} is invariant under smooth equivalences (see Theorem \ref{theorem-invariance} in Section \ref{sectionSDI-invariance}).
\smallskip

\begin{figure}[htb]
	\begin{center}
		\includegraphics[width=9.8cm,height=3.7cm]{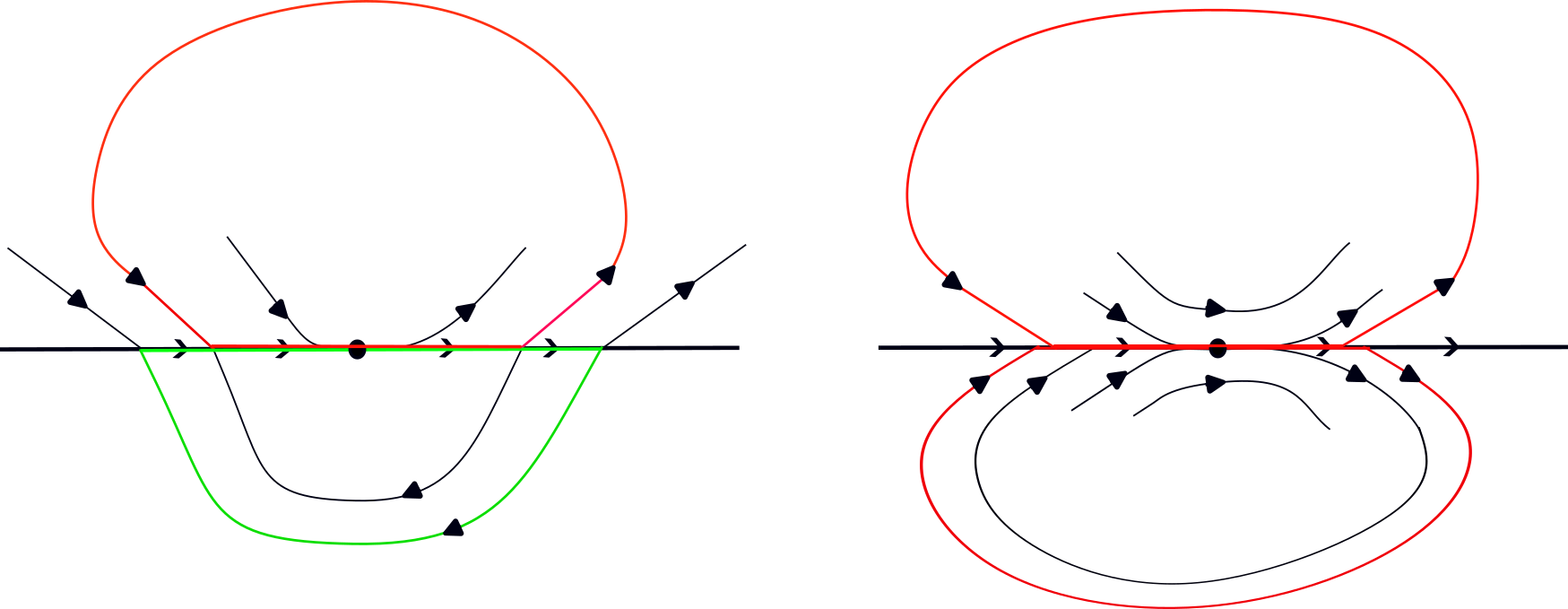}
		{\footnotesize
\put(-282,95){$VI_3$}
\put(-135,95){$VV_1$}
  }
         \end{center}
	\caption{Limit periodic sets in planar PWS systems through two-fold points with sliding (the $VI_3$ case and the $VV_1$ case). They can be located in a region with invisible fold point (green) or in a region with visible fold point (red).}
	\label{fig:2examples1}
\end{figure}

In Sections \ref{section-SDI2Fold} and \ref{section-SDI1Tang} we define the slow divergence integral near tangency points, as already mentioned above (tangency points in $\Sigma_\lambda$ play the role of contact points between $C_\lambda$ and $\mathcal F_\lambda$). We use the invariance of the slow divergence integral under smooth equivalence. The extension of the slow divergence integral to tangency points has proved to be crucial when we study the number of sliding limit cycles (i.e., isolated closed orbits with sliding segments) of a regularized planar PWS system produced by so-called canard limit periodic sets or canard cycles (for more details see \cite{RHKK}). In \cite{RHKK} only the $VI_3$ case has been studied, with canard cycles located in the region with invisible fold point (see the green closed curve in Fig. \ref{fig:2examples1}). Canard cycles contain both stable and unstable sliding portions of the discontinuity manifold (often called switching boundary). A canard cycle can also be located in a region with visible fold point (for example, red closed curves in Fig. \ref{fig:2examples1}), and again the slow divergence integral associated to the segment of the switching
boundary contained in the canard cycle plays an important role when studying sliding limit cycles (see \cite{RHKK1}).  

Besides sliding cycles, crossing limit cycles can exist for PWS systems and for example J. Llibre and co-workers have obtained upper bounds for a number of classes  \cite{esteban2021a,li2021a,llibre2013a}. See also \cite{Freire,carmona2023a,Llibre3LC,LlibreOrd,BragaMello,HuanYang,Han2010,Gasull2020} and references therein.

\smallskip

Section \ref{sectionVI3} is devoted to applications of the slow divergence integral from Section \ref{section2general}. In Section \ref{sectionVI3} we focus on the model used in \cite{RHKK} (visible-invisible two-fold \textbf{$VI_3$}) and read upper bounds of the number of sliding limit cycles and type of bifurcations near so-called generalized canard cycles (Fig. \ref{fig-LPS}) from fractal properties of a bounded and monotone sequence in $\mathbb R$ defined using the slow divergence integral and the notion of slow relation function (see Section \ref{model+assumptions}). The main advantage of this fractal approach is that, instead of computing the multiplicity of zeros of the slow divergence integral (like in \cite{RHKK}), it suffices to find the Minkowski dimension \cite{Falconer} of the sequence. There is a bijective correspondence between the multiplicity and the Minkowski dimension (see Section \ref{section-proofs}). A similar fractal approach has been used in \cite{BoxRenato,BoxDomagoj,BoxVlatko,BoxNovo} where one deals with smooth slow-fast systems. See also \cite{EZZ,zubzup05} and the references therein. We point out that there exist simple formulas for numerical computation of the Minkowski dimension of the sequence (see e.g. \cite{BoxVlatko}). In Section \ref{section-statement-fractal} we state the main fractal results (Theorems \ref{theorem-1}--\ref{theorem-3}), and in Section \ref{section-proofs} we prove them.
\smallskip

For the sake of readability, in this paper we work in $\mathbb R^2$ using the Euclidean metric. We believe that the notion of slow divergence integral in regularized PWS systems on a smooth surface could also be defined. We point out that the slow divergence integral \cite{DDR-book-SF} is defined for slow-fast systems on a smooth surface.

\smallskip

\section{The slow divergence integral in PWS systems with sliding}\label{section2general}

First we recall the basic definitions in PWS theory \cite{Bernardo08,GST2011} (switching boundary, sliding set, crossing set, sliding vector field, tangency point, two-fold singularity, etc.). Then we define the notion of slow divergence integral of a regularized PWS system along a sliding segment (not containing singularities of the sliding vector field) and prove that the integral is invariant under smooth equivalences (see Section \ref{sectionSDI-invariance}). In Section \ref{section-SDI2Fold} we extend the definition of the slow divergence integral to segments consisting of a stable sliding region, an unstable sliding region and a two-fold singularity between them. If the two-fold singularity is visible-invisible, then we assume that the sliding vector field is regular or has a hyperbolic singularity in the two-fold point. In Section \ref{section-SDI1Tang} we define the slow divergence integral near a tangency point where the tangency (quadratic or more degenerate) appears only in one vector field. 

\smallskip

Consider a $\lambda$-family of PWS systems in the plane
\begin{align}
 \dot z = \begin{cases}
           Z_\lambda^+(z) &\text{for}\quad z\in\Sigma_\lambda^+,\\
           Z_\lambda^-(z) &\text{for}\quad z\in\Sigma_\lambda^-,
          \end{cases}\label{zZpminvariance}
\end{align}
where $z=(x,y)$, $\lambda\sim\lambda_0\in\mathbb R^l$ and the switching boundary is a smooth $\lambda$-family of $1$-dimensional manifolds $\Sigma_\lambda$ given by
\[\Sigma_\lambda=\{z\in\mathbb R^2 \ | \ h_\lambda(z)=0\}\cap V,\]
with an open set $V$ and a smooth family of functions $h_\lambda$ such that $\nabla h_\lambda(z)\ne (0,0)$, $\forall z\in \Sigma_\lambda$. The switching boundary $\Sigma_\lambda$ separates the open set $\Sigma_\lambda^+=\{z\in V \ | \ h_\lambda(z)>0\}$ from the open set $\Sigma_\lambda^-=\{z\in V \ | \ h_\lambda(z)<0\}$. We assume that the $\lambda$-family of vector fields $Z_\lambda^+=(X_\lambda^+,Y_\lambda^+)$ (resp. $Z_\lambda^-=(X_\lambda^-,Y_\lambda^-)$) is smooth in the closure of the $\lambda$-family $\Sigma_\lambda^+$ (resp. $\Sigma_\lambda^-$). In this paper ``smooth'' means ``$C^\infty$-smooth''.
\smallskip

The subset $\Sigma_\lambda^{sl}\subset \Sigma_\lambda$ consisting of all points $z\in\Sigma_\lambda$ such that $$Z_\lambda^+(h_\lambda)(z)Z_\lambda^-(h_\lambda)(z)<0$$ is called the sliding set, where $Z_\lambda^\pm(h_\lambda)(z):=\nabla h_\lambda(z)\cdot Z_\lambda^\pm(z)$ is the Lie-derivative of $h_\lambda$ with respect to the vector field $Z_\lambda^\pm$ at $z$. A sliding point $z\in\Sigma_\lambda^{sl}$ is stable (resp. unstable) if $Z_\lambda^+(h_\lambda)(z)<0$ and $Z_\lambda^-(h_\lambda)(z)>0$ (resp. $Z_\lambda^+(h_\lambda)(z)>0$ and $Z_\lambda^-(h_\lambda)(z)<0$). We write $\Sigma_\lambda^{sl}=\Sigma_\lambda^{s}\cup\Sigma_\lambda^{u}$ where $\Sigma_\lambda^{s}$ (resp. $\Sigma_\lambda^{u}$) is the set of all stable (resp. unstable) sliding points. In $\Sigma_\lambda^{s}$ (resp. $\Sigma_\lambda^{u}$) the vector fields $Z_\lambda^\pm$ point toward (resp. away from) the switching boundary. We call the set $\Sigma_\lambda^{cr}\subset \Sigma_\lambda$ of all points $z\in\Sigma_\lambda$ such that $$Z_\lambda^+(h_\lambda)(z)Z_\lambda^-(h_\lambda)(z)>0$$ the crossing set.
\smallskip

At each point $z\in\Sigma_\lambda^{cr}$ the orbit of \eqref{zZpminvariance} crosses the switching boundary $\Sigma_\lambda$ (it is the concatenation of the orbit of $Z_\lambda^+$ and the orbit of $Z_\lambda^-$ through $z$). Along the sliding set $\Sigma_\lambda^{sl}$, the flow is given by the Filippov sliding vector field \cite{filippov1988differential}
\begin{equation}\label{FilippovSVFGeneral}
    Z_\lambda^{sl}(z)=\frac{1}{(Z_\lambda^+-Z_\lambda^-)(h_\lambda)}\left(Z_\lambda^+(h_\lambda)Z_\lambda^--Z_\lambda^-(h_\lambda)Z_\lambda^+\right)(z), \ z\in \Sigma_\lambda^{sl}.
\end{equation}
The sliding vector field $Z_\lambda^{sl}$ defined in \eqref{FilippovSVFGeneral} is tangent to $\Sigma_\lambda^{sl}$, i.e., $Z_\lambda^{sl}(z)$ is equal to the convex combination $\tau Z_\lambda^+(z)+(1-\tau)Z_\lambda^-(z)$ with
\begin{equation}\label{taulambda}
    \tau=\tau_\lambda(z)=\frac{-Z_\lambda^-(h_\lambda)}{(Z_\lambda^+-Z_\lambda^-)(h_\lambda)}(z)\in ]0,1[.
\end{equation}
We say that $z\in\Sigma_\lambda^{sl}$ is a pseudo-equilibrium of \eqref{zZpminvariance} if $Z_\lambda^{sl}(z)=0$.
\smallskip

A point $z\in\Sigma_\lambda$ where $Z_\lambda^+(h_\lambda)(z)=0$ or $Z_\lambda^-(h_\lambda)(z)=0$ is a PWS singularity called  tangency. We say that $z\in\Sigma_\lambda$ is a fold singularity (or a fold point) of $Z_\lambda^+$ (resp. $Z_\lambda^-$) if $Z_\lambda^+(h_\lambda)(z)=0$ and $(Z_{\lambda}^+)^2(h_{\lambda})(z)\ne 0$ (resp. $Z_\lambda^-(h_\lambda)(z)=0$ and $(Z_{\lambda}^-)^2(h_{\lambda})(z)\ne 0$). The fold point is visible if $(Z_{\lambda}^+)^2(h_{\lambda})(z)> 0$ (resp. $(Z_{\lambda}^-)^2(h_{\lambda})(z)<0$) and invisible if $(Z_{\lambda}^+)^2(h_{\lambda})(z)< 0$ (resp. $(Z_{\lambda}^-)^2(h_{\lambda})(z)> 0$). 
\smallskip

We say that $z\in\Sigma_\lambda$ is a two-fold singularity if $z$ is a fold point of both $Z_\lambda^\pm$. A two-fold singularity $z\in\Sigma_\lambda$ is said to be visible-visible ($VV$) if $z$ is visible in both $Z_\lambda^\pm$, invisible-invisible ($II$) if $z$ is invisible in both $Z_\lambda^\pm$, and visible-invisible ($VI$) if $z$ is visible in $Z_\lambda^+$ and invisible in $Z_\lambda^-$ or visible in $Z_\lambda^-$ and invisible in $Z_\lambda^+$. Following \cite{Kuznetsov2003}, there exist $7$ (generic) cases for two-fold singularities taking into account the direction of the flow of $Z_\lambda^\pm$ and $Z_\lambda^{sl}$: $2$ visible-visible cases (denoted by $VV_{1}$ and $VV_2$ in \cite{Kuznetsov2003}), $2$ invisible-invisible cases ($II_{1}$ and $II_2$) and $3$ visible-invisible cases ($VI_{1}$, $VI_2$ and $VI_3$). For more details we refer to \cite{bonet-reves2018a,GST2011,kristiansen2015a,Kuznetsov2003}.
In Section \ref{section-SDI2Fold} we define the notion of slow divergence integral near two-fold singularities of sliding type ($VV_1$, $II_1$, $VI_2$ and $VI_3$). The four sliding cases are illustrated in Fig. \ref{fig-sliding2Fold}. We also treat a visible-invisible two-fold singularity where the sliding vector field points toward (or away from) the two-fold singularity on both sides (Fig. \ref{fig-sliding2FoldNonGeneric}).

\smallskip

We consider a regularized PWS system \cite{RHKK}
\begin{align}\label{regul-gener}
  \dot z &=\phi(h_\lambda(z)\epsilon^{-1}) Z_\lambda^+(z) + (1-\phi(h_\lambda(z)\epsilon^{-1}))Z_\lambda^-(z)
 \end{align}
where $0<\epsilon\ll 1$ and $\phi:\mathbb{R}\to \mathbb{R}$ is a smooth regularization function. 
 We assume that $\phi$ is strictly monotone, i.e., 
 \begin{equation}
     \label{strictly-mono}
     \phi'(u)>0 \text{ for all } u\in \mathbb R,
 \end{equation}
  and $\phi$ has the following asymptotics for $u\to\pm\infty$: \begin{align}\label{phi-asim}
 \phi(u)\rightarrow \begin{cases}
                     1 & \text{for}\quad  u\rightarrow \infty,\\
                     0 & \text{for}\quad u\rightarrow -\infty.
                    \end{cases}
                    \end{align}
 Moreover, we assume that $\phi$ is smooth at $\pm \infty$ in the following sense: The functions
\begin{align*}
 \phi_+(u):=\begin{cases}
             1 & \text{for}\quad u=0,\\
             \phi(u^{-1}) & \text{for}\quad u>0,
            \end{cases},\quad
            \phi_-(u):=\begin{cases}
             \phi(-u^{-1}) & \text{for}\quad u>0,\\
             0 & \text{for}\quad u=0,
            \end{cases}
\end{align*}
are smooth at $u=0$.

Using the asymptotics of $\phi$ given in \eqref{phi-asim}, the system \eqref{regul-gener} becomes the PWS system \eqref{zZpminvariance}  in the limit $\epsilon\rightarrow 0$. Combining this with the fact that $\phi$ is smooth at $\pm \infty$, we have that, for $z$ kept in any 
fixed compact set in $V\setminus \Sigma_\lambda$, the right hand side of \eqref{regul-gener} is an $o(1)$-perturbation of the right hand side of \eqref{zZpminvariance} where $o(1)$ is a smooth function in $(z,\epsilon,\lambda)$ and tends to $0$ as $\epsilon\to 0$, uniformly in $(z,\lambda)$. Thus, the PWS system \eqref{zZpminvariance} describes the dynamics of \eqref{regul-gener}, for $\epsilon>0$ small, as long as $z$ is kept uniformly away from $\Sigma_\lambda$. 
\smallskip

Near $\Sigma_\lambda^{sl}$, the dynamics of \eqref{regul-gener}, with $0<\epsilon\ll 1$, is given by Proposition \ref{prop-sliding} (see also \cite{Sotomayor96}).

\subsection{Definition and invariance of the slow divergence integral}\label{sectionSDI-invariance} 

\begin{proposition}\label{prop-sliding}
    Suppose that the PWS system \eqref{zZpminvariance} has a stable (resp. unstable) sliding point $p\in\Sigma_{\lambda_0}^{sl}$. Then, for each $0<\epsilon\ll 1$ and $\lambda\sim\lambda_0$, \eqref{regul-gener} has a locally invariant manifold near $p$ with foliation by stable (resp. unstable) fibers, and the reduced dynamics on this manifold (when $\epsilon\to 0$) is given by sliding vector
field $Z_\lambda^{sl}$ defined in \eqref{FilippovSVFGeneral}.
\end{proposition}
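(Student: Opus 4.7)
The plan is to reduce the regularized system to a standard singularly perturbed slow-fast system via a cylindrical blow-up of the switching manifold, and then invoke Fenichel's theorem. The calculation of the reduced flow will then reproduce the Filippov sliding vector field exactly.

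First I would straighten the switching boundary: since $\nabla h_\lambda\neq 0$, a smooth $\lambda$-dependent change of coordinates near $p$ brings $h_\lambda$ to the form $h_\lambda(z)=y$, with $p$ at the origin and $\Sigma_\lambda=\{y=0\}$. Write $Z_\lambda^\pm=(X_\lambda^\pm,Y_\lambda^\pm)$, so that $Z_\lambda^\pm(h_\lambda)=Y_\lambda^\pm$. In these coordinates \eqref{regul-gener} reads
\begin{align*}
\dot x &= \phi(y/\epsilon)X_\lambda^+(x,y)+(1-\phi(y/\epsilon))X_\lambda^-(x,y),\\
\dot y &= \phi(y/\epsilon)Y_\lambda^+(x,y)+(1-\phi(y/\epsilon))Y_\lambda^-(x,y).
\end{align*}
Next I would enter the scaling chart of the cylindrical blow-up of $\{y=0,\epsilon=0\}$ by setting $y=\epsilon\bar y$. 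This produces the smooth slow-fast system
\begin{align*}
\dot x &= \phi(\bar y)X_\lambda^+(x,\epsilon\bar y)+(1-\phi(\bar y))X_\lambda^-(x,\epsilon\bar y),\\
\epsilon\dot{\bar y} &= \phi(\bar y)Y_\lambda^+(x,\epsilon\bar y)+(1-\phi(\bar y))Y_\lambda^-(x,\epsilon\bar y),
\end{align*}
with $\bar y$ fast and $x$ slow.

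Then I would identify the critical manifold and verify normal hyperbolicity. At $\epsilon=0$, the fast equation vanishes iff
\[
\phi(\bar y)=\frac{-Y_\lambda^-(x,0)}{(Y_\lambda^+-Y_\lambda^-)(x,0)}=\tau_\lambda(x,0),
\]
with $\tau_\lambda$ as in \eqref{taulambda}. At the stable sliding point we have $Y_{\lambda_0}^+(0,0)<0<Y_{\lambda_0}^-(0,0)$, so the right-hand side lies in $(0,1)$. Because $\phi$ is strictly monotone by \eqref{strictly-mono} with range $(0,1)$ thanks to the asymptotics \eqref{phi-asim}, the implicit function theorem yields a smooth family of solutions $\bar y=\bar y_\lambda(x)$ defining a one-dimensional critical manifold $S_0$. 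The linearization of the fast equation at $S_0$ is $\phi'(\bar y_\lambda(x))(Y_\lambda^+-Y_\lambda^-)(x,0)$, which is strictly negative in the stable case (attracting) and strictly positive in the unstable case (repelling); in both cases $S_0$ is normally hyperbolic.

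Now I would apply Fenichel's theorem: for each $\lambda\sim\lambda_0$ and $0<\epsilon\ll 1$ there exists a locally invariant $C^\infty$ manifold $S_\epsilon$, $O(\epsilon)$-close to $S_0$, equipped with the stable (resp. unstable) fiber bundle inherited from the fast linearization. Pulling $S_\epsilon$ back through $y=\epsilon\bar y$ gives the advertised locally invariant manifold in the original variables, with the claimed foliation. Finally, the reduced slow dynamics on $S_0$ is obtained by substituting $\bar y=\bar y_\lambda(x)$ in the $x$-equation at $\epsilon=0$:
\[
\dot x=\tau_\lambda(x,0)X_\lambda^+(x,0)+(1-\tau_\lambda(x,0))X_\lambda^-(x,0),
\]
which, combined with $y=0$ on $\Sigma_\lambda$, is precisely the convex combination in \eqref{FilippovSVFGeneral} defining $Z_\lambda^{sl}$.

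The main obstacle is the verification that the scaling chart covers the dynamics near $\Sigma_\lambda^{sl}$: one must check compatibility in the two directional charts $y=\pm\epsilon u^{-1}$ of the blow-up, where the smoothness of $\phi_\pm$ at $0$ is essential to ensure the blown-up vector field extends smoothly and matches the outer PWS flow of \eqref{zZpminvariance} as $u\to 0$. Once this is in place, Fenichel's theorem applies uniformly and the rest is a straightforward computation.
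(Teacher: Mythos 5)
Your proposal is correct and takes essentially the same approach as the paper: the paper likewise rescales via $h_\lambda(x,y)=\epsilon\tilde y$ (keeping $\Sigma_\lambda$ as a graph $y=f_\lambda(x)$ rather than first straightening it), identifies the critical curve $\tilde y=\phi^{-1}(\tau_\lambda)$, verifies normal hyperbolicity through the same nonzero eigenvalue $(Z_\lambda^+-Z_\lambda^-)(h_\lambda)\,\phi'\left(\phi^{-1}(\tau_\lambda)\right)$, and concludes with Fenichel's theorem and the identical reduced-flow computation yielding $\tau_\lambda X_\lambda^++(1-\tau_\lambda)X_\lambda^-$. The only cosmetic differences are that you straighten the boundary first and frame the rescaling as the scaling chart of a blow-up; the directional-chart compatibility you flag as the ``main obstacle'' is not needed for this purely local statement.
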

\begin{proof}
Without loss of generality, we can assume that $\frac{\partial h_{\lambda_0}}{\partial y}(p)\ne 0$. Then the switching boundary $\Sigma_\lambda$ (locally near $p$) is the graph of a smooth function $y=f_\lambda(x)$. Using $h_\lambda(x,y)=\epsilon \tilde y$, the system \eqref{regul-gener} multiplied by $\epsilon>0$ becomes a slow-fast system
\begin{equation}\label{rescaling-equation}
\begin{aligned}
 \dot x &=\epsilon\left(\phi(\tilde y)X_\lambda^+(x,f_\lambda(x))+(1-\phi(\tilde y))X_\lambda^-(x,f_\lambda(x))+O(\epsilon\tilde y)\right),\\
 \dot {\tilde y} &=\phi(\tilde y)Z_\lambda^+(h_\lambda)(x,f_\lambda(x))+(1-\phi(\tilde y))Z_\lambda^-(h_\lambda)(x,f_\lambda(x))+O(\epsilon\tilde y).
\end{aligned}
 \end{equation}
When $\epsilon=0$, the curve of singularities of \eqref{rescaling-equation} is given by $\tilde y=\phi^{-1}\left(\tau_\lambda(x,f_\lambda(x))\right)$ where $\tau_\lambda$ is defined in \eqref{taulambda}. Each singularity $(x,\phi^{-1}\left(\tau_\lambda(x,f_\lambda(x))\right))$ is semi-hyperbolic with the nonzero eigenvalue equal to the divergence of the vector field \eqref{rescaling-equation}, with $\epsilon=0$, computed in that singularity:
\begin{equation}\label{divmotivation}
    (Z_\lambda^+-Z_\lambda^-)(h_\lambda)(x,f_\lambda(x))\phi'\left(\phi^{-1}\left(\tau_\lambda(x,f_\lambda(x))\right)\right).
\end{equation}
The reason why the eigenvalue in \eqref{divmotivation} is nonzero is because $Z_\lambda^+(h_\lambda)Z_\lambda^-(h_\lambda)<0$ and $\phi'>0$ (see \eqref{strictly-mono}). The curve of singularities is attracting (resp. repelling) if $p$ is a stable (resp. unstable) sliding point. The result follows now from Fenichel’s theory \cite{fen3}. Notice that the reduced dynamics of \eqref{rescaling-equation} along the curve of singularities is given by the vector field 
\begin{equation}\label{resc-SVF}\left(\tau_\lambda X_\lambda^++(1-\tau_\lambda)X_\lambda^-\right)(x,f_\lambda(x)).
\end{equation}
We divided the $x$-component in \eqref{rescaling-equation} by $\epsilon$ and let $\epsilon\to 0$ with $\tilde y=\phi^{-1}\left(\tau_\lambda(x,f_\lambda(x))\right)$. We get the same expression \eqref{resc-SVF} if we use the definition of the slow vector field introduced in Section \ref{Section-Introduction}.
This completes the proof.
\end{proof}

Following \cite[Chapter 5]{DDR-book-SF} or Section \ref{Section-Introduction} in the smooth slow-fast system \eqref{rescaling-equation} one can define the notion of slow divergence integral along normally hyperbolic curve of singularities $\tilde y=\phi^{-1}\left(\tau_\lambda(x,f_\lambda(x))\right)$ when the sliding vector field in \eqref{resc-SVF} has no singularities: it is the integral of the divergence in \eqref{divmotivation} where the variable of integration is the time variable of the flow of the sliding vector field. This is our motivation for the definition of the notion of slow divergence integral of regularized PWS system \eqref{regul-gener} (see also \cite{RHKK}).

\begin{definition}[\textbf{Slow divergence integral}] \label{definition-SDIGen}
 Let $m_\lambda\subset \Sigma_\lambda^{sl}$  be a bounded segment (Fig. \ref{fig-slidsegment}) not containing pseudo-equilibria of the PWS system \eqref{zZpminvariance}. Let $z_\lambda:[ t_1, t_2]\to \mathbb R^2$ be a solution of $z'( t)=Z_\lambda^{sl}(z( t))$ where $z_\lambda( t_1)$ and $z_\lambda( t_2)$ are the end points of $m_\lambda$ ($z_\lambda$ is a parameterization of $m_\lambda$). Then we define the slow divergence integral of regularized PWS system \eqref{regul-gener} associated to $m_\lambda$ as 
 \begin{equation}\label{slow-divergence-int}
I(m_\lambda)=\int_{t_1}^{ t_2}E_\lambda(z_\lambda( t))d t
 \end{equation}
 where 
 $$E_\lambda(z)=(Z_\lambda^+-Z_\lambda^-)(h_\lambda)(z)\phi'\left(\phi^{-1}\left(\tau_\lambda(z)\right)\right), \ z\in \Sigma_\lambda^{sl}.$$
\end{definition}
\begin{remark}
    Note that the definition of the slow divergence integral given by \eqref{slow-divergence-int} is independent of the choice of $z_\lambda$. Indeed, if $\hat z_\lambda$ is another solution to $z'( t)=Z_\lambda^{sl}(z( t))$ and $p\in m_\lambda$, then there exist $\tilde t\in [t_1,t_2]$ and $\bar t$ such that $z_\lambda(\tilde t)=\hat z_\lambda(\bar t)=p$. Then we have $z_\lambda( t)=\hat z_\lambda(t+\bar t-\tilde t)$ due to uniqueness of solutions. Now, we get
    $$\int_{t_1+\bar t-\tilde t}^{ t_2+\bar t-\tilde t}E_\lambda(\hat z_\lambda(s))d s=\int_{t_1}^{ t_2}E_\lambda(z_\lambda( t))d t,$$
where we use the change of variable $s=t+\bar t-\tilde t$.  
\end{remark}
 If $m_\lambda$ is stable (resp. unstable), then $I(m_\lambda)$ is negative (resp. positive).

\smallskip

The slow divergence integral from Definition \ref{definition-SDIGen} is invariant under smooth equivalences (Theorem \ref{theorem-invariance}.1 and Theorem \ref{theorem-invariance}.2). Theorem \ref{theorem-invariance}.3 tells us how to compute $I(m_\lambda)$ for an arbitrary  parameterization of $m_\lambda$ (see also \cite[Proposition 5.3]{DDR-book-SF}).

\begin{figure}[htb]
	\begin{center}
		\includegraphics[width=4.7cm,height=4.4cm]{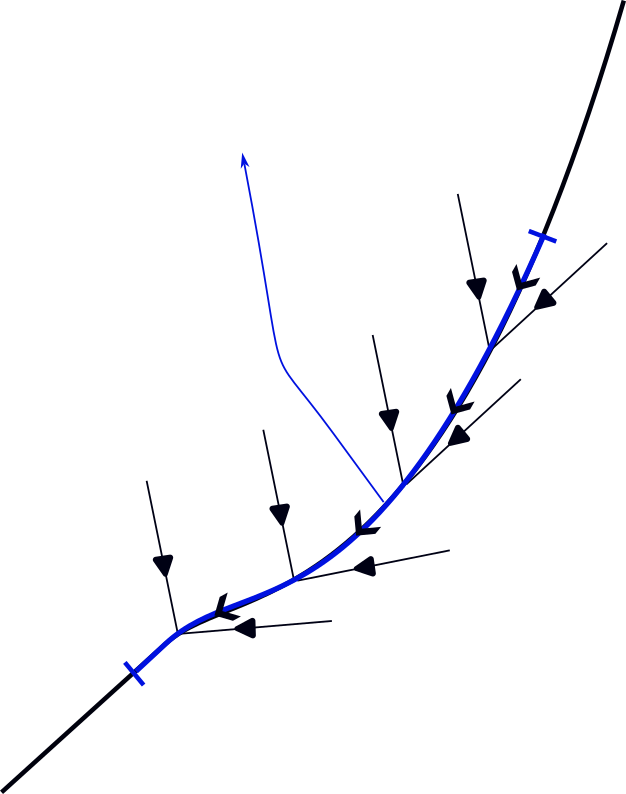}
		{\footnotesize
         \put(-87,105){$m_\lambda$}
        \put(-15,130){$\Sigma_\lambda^{sl}$}
}
         \end{center}
	\caption{A segment $m_\lambda\subset \Sigma_\lambda^{sl}$ (blue).}
	\label{fig-slidsegment}
\end{figure}

\begin{theorem}[\textbf{Invariance of the slow divergence integral}]\label{theorem-invariance} Let us denote the family of vector field in \eqref{regul-gener} by $Z_{\epsilon,\lambda}$ and let $m_\lambda\subset \Sigma_\lambda^{sl}$ be as in Definition \ref{definition-SDIGen}. The following statements are true.
\begin{enumerate}
    \item Let $T:V_w\to V_z\subset V$ ($w\mapsto z=T(w)$) be a smooth coordinate transformation, with open sets $V_w,V_z\subset \mathbb R^2$. Let $I(m_\lambda)$ be the slow divergence integral of $Z_{\epsilon,\lambda}$ along $m_\lambda\subset V_z$. Then the slow divergence integral $I(T^{-1}(m_\lambda))$ of the pullback of the vector field $Z_{\epsilon,\lambda}|_{V_z}$ along $T^{-1}(m_\lambda)\subset V_w$ is equal to $I(m_\lambda)$.
    \item Let $g$ be a smooth strictly positive function defined in a neighborhood of $m_\lambda$.  Then the slow divergence integral of $Z_{\epsilon,\lambda}$ along $m_\lambda$ is equal to the slow divergence integral of the equivalent
vector field $g.Z_{\epsilon,\lambda}$ along $m_\lambda$.
\item Let $p_\lambda:[v_1,v_2]\to \mathbb R^2$ be a parameterization of $m_\lambda$. Then we have $$I(m_\lambda)=\int_{ v_1}^{v_2}\frac{E_\lambda(p_\lambda(v))d v}{|\tilde p_\lambda(v)|},$$ 
where $\tilde p_\lambda$ is a smooth $\lambda$-family of nowhere zero functions satisfying $$Z_\lambda^{sl}(p_\lambda(v))=\tilde p_\lambda(v)p_\lambda'(v).$$
\end{enumerate}
    \end{theorem}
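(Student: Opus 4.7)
The plan is to handle the three statements in order, exploiting the fact that the Lie derivative $Z_\lambda^\pm(h_\lambda)$ and the coefficient $\tau_\lambda$ in \eqref{taulambda} both transform intrinsically under smooth coordinate changes and are unaffected by a common positive rescaling of $Z_\lambda^\pm$. For Theorem \ref{theorem-invariance}.1, I would write the pullback vector fields $\tilde Z_\lambda^\pm(w) = (DT(w))^{-1} Z_\lambda^\pm(T(w))$ and the pullback function $\tilde h_\lambda(w) = h_\lambda(T(w))$. A short chain-rule computation gives $\tilde Z_\lambda^\pm(\tilde h_\lambda)(w) = Z_\lambda^\pm(h_\lambda)(T(w))$, so by \eqref{taulambda} also $\tilde\tau_\lambda(w) = \tau_\lambda(T(w))$, and hence $\tilde E_\lambda(w) = E_\lambda(T(w))$. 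Linearity of pullback applied to the convex combination \eqref{FilippovSVFGeneral} shows that $\tilde Z_\lambda^{sl}$ is the pullback of $Z_\lambda^{sl}$, so $\tilde z_\lambda(t) = T^{-1}(z_\lambda(t))$ is its integral curve on the same time interval $[t_1,t_2]$. Substituting these identities into Definition \ref{definition-SDIGen} applied to the pullback system yields $I(T^{-1}(m_\lambda)) = I(m_\lambda)$.

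For Theorem \ref{theorem-invariance}.2, I would substitute $gZ_\lambda^\pm$ in place of $Z_\lambda^\pm$ in \eqref{taulambda} and \eqref{FilippovSVFGeneral}. The common factor $g$ cancels in $\tau_\lambda$, while $(gZ_\lambda^+ - gZ_\lambda^-)(h_\lambda) = g(Z_\lambda^+-Z_\lambda^-)(h_\lambda)$, so the new integrand equals $g\cdot E_\lambda$ and the new Filippov vector field equals $g\cdot Z_\lambda^{sl}$. Its integral curve is therefore a time reparameterization $z_\lambda^{\mathrm{new}}(t) = z_\lambda(s(t))$ with $s'(t) = g(z_\lambda(s(t)))$, and the change of variables $s = s(t)$ in the new slow divergence integral absorbs the factor $g$ exactly, returning $I(m_\lambda)$.

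For Theorem \ref{theorem-invariance}.3, I would write $p_\lambda(v) = z_\lambda(t(v))$ for a unique smooth reparametrization $t(v)$. Differentiating, and using $z_\lambda' = Z_\lambda^{sl}\circ z_\lambda$ together with the hypothesis $Z_\lambda^{sl}(p_\lambda(v)) = \tilde p_\lambda(v) p_\lambda'(v)$, yields $t'(v) = 1/\tilde p_\lambda(v)$. Since $\tilde p_\lambda$ is nowhere zero its sign is constant on $[v_1,v_2]$: when positive, $p_\lambda$ respects the orientation of $z_\lambda$ and a direct change of variables in \eqref{slow-divergence-int} gives the formula; when negative, the endpoints swap and the resulting minus sign combines with the negative $\tilde p_\lambda$ to produce the same expression with $|\tilde p_\lambda(v)|$ in the denominator. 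The main obstacle is essentially bookkeeping: one must verify the pullback identity $\tilde Z_\lambda^\pm(\tilde h_\lambda) = Z_\lambda^\pm(h_\lambda)\circ T$ carefully, and track orientation in part 3 to justify the absolute value; neither step requires any idea beyond elementary chain-rule manipulations.
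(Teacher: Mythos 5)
Your proposal is correct and follows essentially the same route as the paper: both rest on the invariance of the Lie derivatives $Z_\lambda^\pm(h_\lambda)$ (hence of $\tau_\lambda$ and $E_\lambda$) under pullback, the identification of the pulled-back Filippov field with the Filippov field of the pullbacks for Statement 1, and a time reparameterization absorbing the factor $g$ for Statement 2. Your orientation bookkeeping for Statement 3 correctly fills in the details the paper leaves as ``similar to Statement 2.''
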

\begin{proof}
\emph{Statement 1}. The pullback of  the vector field $Z_{\epsilon,\lambda}|_{V_z}$ can be written as 
\begin{align*}
    T^*(Z_{\epsilon,\lambda}|_{V_z})(w)=&\phi(h_\lambda\circ T(w)\epsilon^{-1}) W_\lambda^+(w)+ (1-\phi(h_\lambda\circ T(w)\epsilon^{-1}))W_\lambda^-(w)
    \end{align*}
    where $W_\lambda^\pm(w)=DT(w)^{-1}(Z_\lambda^\pm\circ T)(w)$. It is not difficult to see that the Lie-derivative of $h_\lambda\circ T$ with respect to the vector field $W_\lambda^\pm$ is given by
    \begin{equation}\label{transGeneral}
   W_\lambda^\pm(h_\lambda\circ T)(w)=Z_\lambda^\pm(h_\lambda)(T(w)).
    \end{equation}
    Using \eqref{transGeneral} and Definition \ref{definition-SDIGen} we find that the slow divergence integral of $T^*(Z_{\epsilon,\lambda}|_{V_z})$ along $T^{-1}(m_\lambda)$ is given by 
$$I(T^{-1}(m_\lambda))=\int_{t_1}^{ t_2}E_\lambda(T(w_\lambda( t)))d t$$
where $w_\lambda:[ t_1, t_2]\to  T^{-1}(m_\lambda)$ is a solution of $w'( t)=W_\lambda^{sl}(w( t))$ (the Filippov sliding vector field along $ T^{-1}(m_\lambda)$ is given by $W_\lambda^{sl}(w)=DT(w)^{-1}Z_\lambda^{sl}(T(w))$). Since $T\circ w_\lambda:[t_1, t_2]\to m_\lambda$ is a solution to $z'( t)=Z_\lambda^{sl}(z( t))$, the result follows.\\

\emph{Statement 2}. From Definition \ref{definition-SDIGen} it follows that the slow divergence integral of $g.Z_{\epsilon,\lambda}$ along $m_\lambda$ is equal to
\begin{equation}\label{int-equiv}
    I(m_\lambda)=\int_{\hat t_1}^{\hat t_2}E_\lambda(\hat z_\lambda(\hat t))g(\hat z_\lambda(\hat t))d\hat t
\end{equation}
where $\hat z_\lambda:[\hat t_1,\hat t_2]\to  m_\lambda$ and $\hat z_\lambda'(\hat t)=g(\hat z_\lambda(\hat t))Z_\lambda^{sl}(\hat z_\lambda(\hat t))$. We make in the integral in \eqref{int-equiv} the change of variable $ t=\rho(\hat t)=\int_{\hat t_1}^{\hat t}g(\hat z_\lambda( v))d v$ with $\hat t\in [\hat t_1,\hat t_2]$. Then we have
$$\int_{\hat t_1}^{\hat t_2}E_\lambda(\hat z_\lambda(\hat t))g(\hat z_\lambda(\hat t))d\hat t=\int_{0}^{\rho(\hat t_2)}E_\lambda(\hat z_\lambda\circ \rho^{-1}( t))d t.$$
Since $(\hat z_\lambda\circ\rho^{-1})'( t)=Z_\lambda^{sl}((\hat z_\lambda\circ\rho^{-1})( t))$, $ t\in [0,\rho(\hat t_2)]$, this integral is the slow divergence integral of $Z_{\epsilon,\lambda}$ associated to $m_\lambda$. This completes the proof of Statement 2.

\emph{Statement 3}. The proof of Statement 3 is similar to the proof of Statement 2. 
\end{proof}

\begin{remark}
    \label{invariance-changetime}
    It follows directly from Definition \ref{definition-SDIGen} that the slow divergence integral of $-Z_{\epsilon,\lambda}$ along $m_\lambda$ is equal to the slow divergence integral of $Z_{\epsilon,\lambda}$ along $m_\lambda$ multiplied by $-1$.
\end{remark}

We will use the invariance of the slow divergence integral under smooth
equivalences in Section \ref{section-SDI2Fold} and Section \ref{section-SDI1Tang}.

If $m_\lambda\subset \Sigma_\lambda^{sl}$ contains pseudo-equilibria, then the slow divergence integral associated to $m_\lambda$ is not well-defined.

\subsection{The slow divergence integral near two-fold singularities}\label{section-SDI2Fold}

In this section we suppose that the sliding vector field $Z_\lambda^{sl}$, given by \eqref{FilippovSVFGeneral}, is defined around a two-fold singularity. Our goal is to define the notion of slow divergence integral near such a two-fold singularity. Since the slow divergence integral \eqref{slow-divergence-int} is invariant under smooth equivalences (Theorem \ref{theorem-invariance}), we use a normal form of \eqref{zZpminvariance}, locally near the two-fold singularity, in which $h_\lambda(x,y)=y$ and the two-fold point corresponds to the origin $p=(0,0)$. Notice that such normal form coordinates exist because $\nabla h_\lambda(z)\ne (0,0)$, $\forall z\in \Sigma_\lambda$, in \eqref{zZpminvariance}.
\smallskip

Using $h_\lambda(x,y)=y$ the two-fold $p$ satisfies
\begin{equation}\label{conditionsTwoFold}
Z_\lambda^\pm(h_\lambda)(0)=Y_\lambda^\pm(0)=0, \ \ (Z_{\lambda}^\pm)^2(h_{\lambda})(0)=X_\lambda^\pm(0)\partial_xY_\lambda^\pm(0)\ne 0,
\end{equation}
and the sliding vector field $Z_\lambda^{sl}$ near $p$ can be written as
\begin{equation}\label{FSVF}
X_\lambda^{sl}(x)=\frac{\det Z_\lambda(x)}{(Y_\lambda^--Y_\lambda^+)(x,0)}
\end{equation}
where $$\det Z_\lambda(x):=(X_\lambda^+Y_\lambda^--X_\lambda^-Y_\lambda^+)(x,0).$$
\begin{remark}
    The notation $\det Z_\lambda$ comes from \cite{bonet-reves2018a}. In \cite{RHKK} a similar notation has been used for $-(X_\lambda^+Y_\lambda^--X_\lambda^-Y_\lambda^+)$. 
\end{remark}
Since we assumed that the sliding vector field $X_\lambda^{sl}$ is defined around the two-fold $p$, we find that $X_\lambda^+(0)X_\lambda^-(0)>0$ if the folds have the same visibility (visible-visible or invisible-invisible) and $X_\lambda^+(0)X_\lambda^-(0)<0$ if the folds have opposite visibility. We have $p\in\partial\Sigma_\lambda^{s}\cap\partial\Sigma_\lambda^{u}$. These properties follow directly from \eqref{conditionsTwoFold} and the definition of visible and invisible folds (see \cite[Lemma 2.8]{bonet-reves2018a}), and imply that $\partial_x (Y_\lambda^-- Y_\lambda^+)(0)\ne 0$ and $\partial_xY_\lambda^+(0)\partial_xY_\lambda^-(0)<0$. 
\smallskip

Using $\partial_x (Y_\lambda^-- Y_\lambda^+)(0)\ne 0$ it is clear that the sliding vector field in \eqref{FSVF} has a removable singularity in $x=0$ and 
\begin{equation}
    \label{removablesing}
    X_\lambda^{sl}(x)=\nu+O(x), \ \ \nu=\frac{(\det Z_\lambda)'(0)}{\partial_x (Y_\lambda^-- Y_\lambda^+)(0)}.
\end{equation}
From \cite[Lemma 2.9]{bonet-reves2018a} and \cite[Corollary 2.10]{bonet-reves2018a} it follows that $\nu\ne 0$ and $\sgn(\nu)=\sgn(X_\lambda^+(0))$ if the folds have the same visibility ($VV_1$ and $II_1$ in Fig. \ref{fig-sliding2Fold}), and that $\nu\ne 0$ and $\sgn(\nu)=-\sgn\left(X_\lambda^+(0)(\det Z_\lambda)'(0)\right)$ if the folds have opposite visibility and $(\det Z_\lambda)'(0)\ne 0$ ($VI_2$ and $VI_3$ in Fig. \ref{fig-sliding2Fold}). \textit{If the folds have opposite visibility, we assume that $\nu\ne 0$ in \eqref{removablesing} or $x=0$ is a hyperbolic singularity of the sliding vector field $X_\lambda^{sl}$ (or, equivalently, $x=0$ is a zero of multiplicity $1$ or $2$ of the function $\det Z_\lambda$).} We refer to Fig. \ref{fig-sliding2FoldNonGeneric} (the multiplicity of the zero $x=0$ of $\det Z_\lambda$ is $2$). 

\begin{figure}[htb]
	\begin{center}
		\includegraphics[width=8.9cm,height=5.4cm]{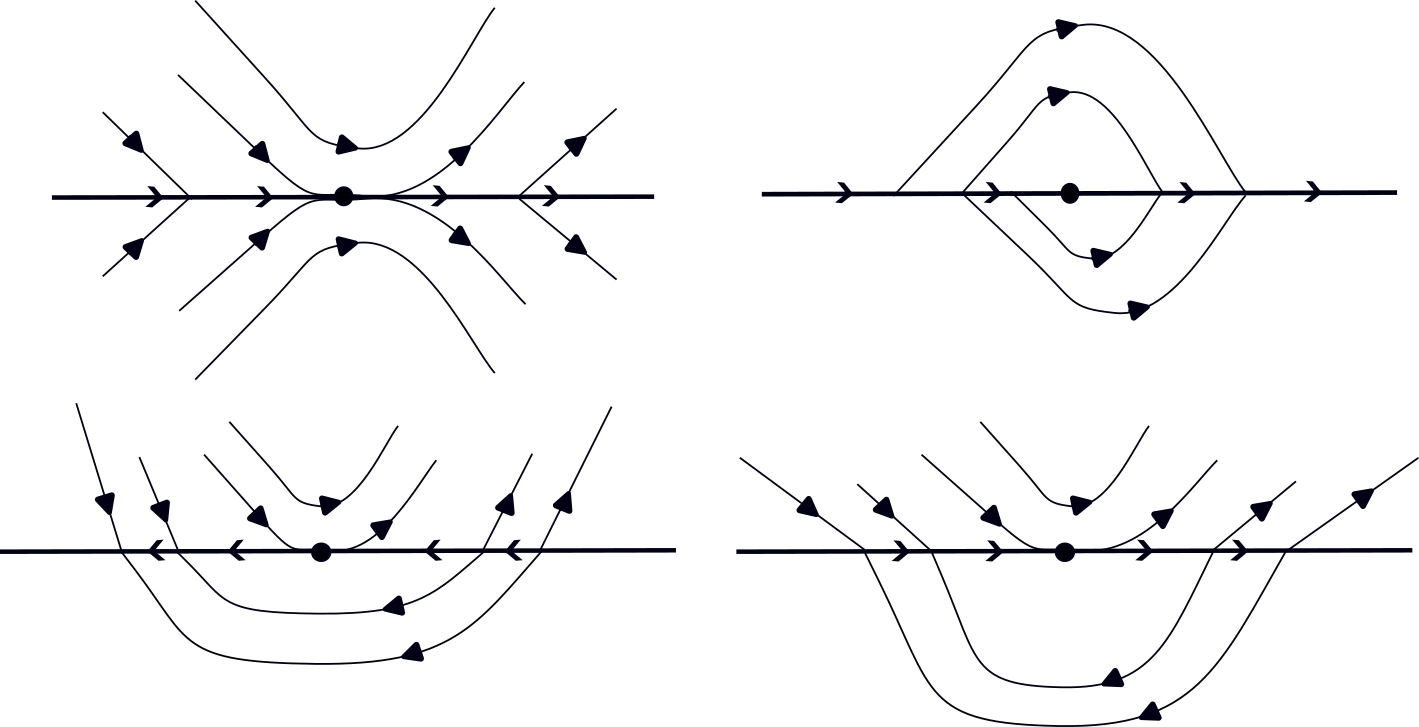}
		{\footnotesize
        \put(-125,62){$VI_3$}
        \put(-260,62){$VI_2$}
        \put(-125,140){$II_1$}
        \put(-260,140){$VV_1$}
}
         \end{center}
	\caption{The different types of two-fold
singularities with sliding: the folds in $VV_1$ and $II_1$ have the same visibility, while the folds in $VI_2$ and $VI_3$ have opposite visibility.}
	\label{fig-sliding2Fold}
\end{figure}

From $\partial_x (Y_\lambda^-- Y_\lambda^+)(0)\ne 0$ and $\partial_xY_\lambda^+(0)\partial_xY_\lambda^-(0)<0$ it follows that the function $\tau_\lambda$ defined in \eqref{taulambda} has the following property when $x\to 0$:
\begin{equation}
    \label{tau-extended}
    \lim_{x\to 0}\tau_\lambda(x,0)=\lim_{x\to 0} \frac{-Y_\lambda^-}{Y_\lambda^+-Y_\lambda^-}(x,0)=\frac{\partial_xY_\lambda^-(0)}{\partial_x (Y_\lambda^-- Y_\lambda^+)(0)}\in ]0,1[.
\end{equation}
\smallskip

Let us now compute the slow divergence integral along $[x_0,x_1]$, with $0<x_0<x_1$. We assume that $x_1$ is small enough such that $[x_0,x_1]$ does not contain any singularities of the sliding vector field $X_\lambda^{sl}$. We use Theorem \ref{theorem-invariance}.3. We take $p_\lambda(x)=(x,0)$, $x\in [x_0,x_1]$, in Theorem \ref{theorem-invariance}.3. Then we have 
$$\tilde p_\lambda(x)=\frac{\det Z_\lambda(x)}{(Y_\lambda^--Y_\lambda^+)(x,0)}, \  E_\lambda(p_\lambda(v))=(Y_\lambda^+-Y_\lambda^-)(x,0)\phi'\left(\phi^{-1}\left(\tau_\lambda(x,0)\right)\right).$$
This implies 
\begin{equation}
    \label{SDIPomoc}
I([x_0,x_1])=\int_{x_0}^{x_1}\frac{|Y_\lambda^--Y_\lambda^+|(Y_\lambda^+-Y_\lambda^-)(x,0)}{|\det Z_\lambda|(x)}\phi'\left(\phi^{-1}\left(\frac{-Y_\lambda^-}{Y_\lambda^+-Y_\lambda^-}(x,0)\right)\right)dx.
\end{equation}
\begin{figure}[htb]
	\begin{center}
		\includegraphics[width=9.4cm,height=2.5cm]{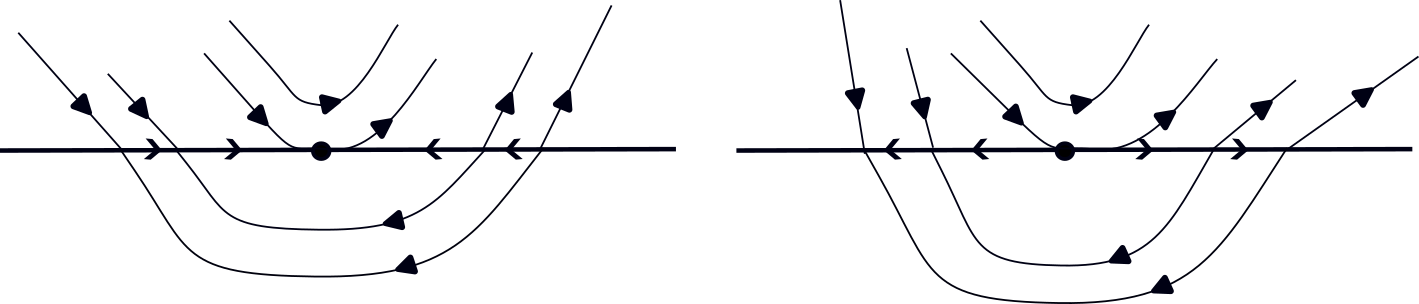}
		{\footnotesize
        \put(-75,-9){$(b)$}
        \put(-213,-9){$(a)$}
}
         \end{center} 
	\caption{Non-generic visible-invisible two-fold singularities. The (extended) sliding vector field has a hyperbolic singularity at the two-fold points. (a) The sliding vector field points toward the two-fold singularity. (b) The sliding vector field points away from the two-fold singularity.}
	\label{fig-sliding2FoldNonGeneric}
\end{figure}
Finally, we define the slow divergence integral along $[0,x_1]$ (the left end point of $[0,x_1]$ is the two-fold point). 
\begin{definition}\label{def-SDIContact} Let $m_\lambda=[0,x_1]$. Then the slow divergence integral along $m_\lambda$ is defined as 
   \begin{equation}
       I(m_\lambda)=\lim_{x_0\to 0+}I([x_0,x_1]) \nonumber
       \end{equation}
       where $I([x_0,x_1])$ is given in \eqref{SDIPomoc}.
\end{definition}
\begin{remark} Notice that the function $x\mapsto \phi'\left(\phi^{-1}\left(\tau_\lambda(x,0)\right)\right)$ in \eqref{SDIPomoc} can be defined at $x=0$ such that this function is smooth and positive on the segment $m_\lambda$ (see \eqref{strictly-mono}, \eqref{phi-asim} and \eqref{tau-extended}).
If the folds have the same visibility, then $I(m_\lambda)$ is well-defined (finite) because $\nu\ne 0$ in \eqref{removablesing}. 
    Since we assume that the multiplicity of the zero $x=0$ of $\det Z_\lambda$ does not exceed $2$ when the folds have opposite visibility, $I(m_\lambda)$ is finite.
\end{remark}
\begin{remark}\label{remark-SDI-negative} 
   The slow divergence integral along $m_\lambda=[ x_0,0]$, with $x_0<0$, can be defined in a similar way: $I(m_\lambda)=\lim_{x_1\to 0-}I([x_0,x_1])$
       where $I([x_0,x_1])$ has the same form \eqref{SDIPomoc}.
\end{remark}

\subsection{The slow divergence integral near one-sided tangency points}\label{section-SDI1Tang}
In this section we define the slow divergence integral near a tangency point $p\in\partial\Sigma_\lambda^{s}\cup\partial\Sigma_\lambda^{u}$ where both vectors $Z_\lambda^\pm(p)$ are nonzero
and precisely one of them is tangent to $\Sigma_\lambda$ at $p$ (see e.g. Fig. \ref{fig-OneTangency}). Like in Section \ref{section-SDI2Fold}, the switching boundary $\Sigma_\lambda$ is locally given by $h_\lambda(x,y)=y$ and $p=(0,0)$. Since we suppose that $p\in\partial\Sigma_\lambda^{s}\cup\partial\Sigma_\lambda^{u}$, there is a side of $p$ (without loss of generality we take $x>0$) where the sliding vector field is defined, and given by \eqref{FSVF}. If the (nonzero) vector $Z_\lambda^+(0)$ (resp. $Z_\lambda^-(0)$) is  tangent to $\Sigma_\lambda$, then $X_\lambda^+(0)\ne 0$, $Y_\lambda^+(0)=0$ and $Y_\lambda^-(0)\ne 0$ (resp. $X_\lambda^-(0)\ne 0$, $Y_\lambda^-(0)=0$ and $Y_\lambda^+(0)\ne 0$) and 
\begin{equation}
    \label{removablesingOneTang}
    X_\lambda^{sl}(x)=X_\lambda^+(0)+O(x) \  \left(\text{resp. } X_\lambda^{sl}(x)=X_\lambda^-(0)+O(x) \right).
\end{equation}
Since $X_\lambda^+(0)\ne 0$ (resp. $X_\lambda^-(0)\ne 0$), the sliding vector field $X_\lambda^{sl}$ in \eqref{removablesingOneTang} is regular near $x=0$. Thus, the segment $[x_0,x_1]$, with $0<x_0<x_1$, does not contain any singularities of $X_\lambda^{sl}$ if $x_1$ is small enough and we can define the slow divergence integral along $[x_0,x_1]$ exactly in the same way as in Section \ref{section-SDI2Fold}. The slow divergence integral is given by \eqref{SDIPomoc} and we use the same notation $I([x_0,x_1])$.

\begin{figure}[htb]
	\begin{center}
		\includegraphics[width=10.4cm,height=2.5cm]{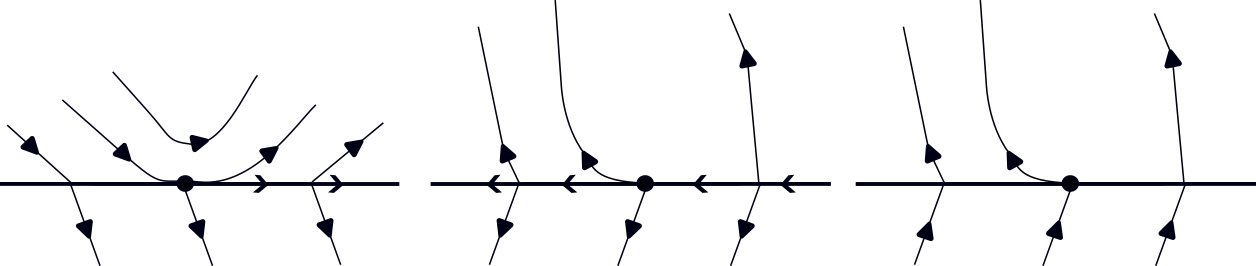}
		{\footnotesize
       \put(-60,-11){$(c)$}
       \put(-158,-11){$(b)$}
       \put(-258,-11){$(a)$}
}
         \end{center}
	\caption{(a) The sliding vector field is defined on one side of the tangency point. (b) The sliding vector field is defined on both sides of the tangency point. (c) A crossing region around the tangency point (in this case the slow divergence integral near the tangency point is not defined). }
	\label{fig-OneTangency}
\end{figure}

We can now define the slow divergence integral near the tangency point $p$.
\begin{definition}\label{def-SDIOneContact} Let $m_\lambda=[0,x_1]$. Then the slow divergence integral along $m_\lambda$ is defined as 
   \begin{equation}
       I(m_\lambda)=\lim_{x_0\to 0+}I([x_0,x_1]).\nonumber
       \end{equation}
\end{definition}
\begin{remark}
    The slow divergence integral $I(m_\lambda)$ from Definition \ref{def-SDIOneContact} is well-defined. Indeed, $\lim_{u\to\pm\infty} \phi'(u)=0$ (due to the smoothness of $\phi$ at $\pm \infty$ given after \eqref{phi-asim}). Moreover, we have (a) $\lim_{u\to 1-} \phi^{-1}(u)=+\infty$, (b) $\lim_{u\to 0+} \phi^{-1}(u)=-\infty$ (see \eqref{phi-asim}) and finally (c) $\frac{-Y_\lambda^-}{Y_\lambda^+-Y_\lambda^-}(x,0)$ tends to $1$ (resp. $0$) as $x\to 0+$ when $Z_\lambda^+(0)$ (resp. $Z_\lambda^-(0)$) is tangent to $\Sigma_\lambda$. It follows from (a), (b) and (c) that the integrand in \eqref{SDIPomoc} can be defined at $x=0$ ($0$ for $x=0$)  and that the integrand is continuous on the segment $m_\lambda$. This implies that $I(m_\lambda)$ is well-defined.
\end{remark}

\section{Limit cycles and fractal analysis through visible-invisible two-fold \textbf{$VI_3$}}\label{sectionVI3}
\subsection{Model and assumptions}\label{model+assumptions}
We consider a PWS system \eqref{zZpminvariance} where we assume that $\lambda\sim\lambda_0\in\mathbb R$, and $h_\lambda(x,y)=y$ (the switching boundary is the line $y=0$).\\
\\
\textbf{Assumption A.} Suppose that there are $\eta_-<0$ and $\eta_+>0$ such that the PWS system \eqref{zZpminvariance} for $\lambda=\lambda_0$ has stable sliding for all $x\in[\eta_-,0[$ (i.e., $Y_{\lambda_0}^+(x,0)<0$ and $Y_{\lambda_0}^-(x,0)>0$ for $x\in[\eta_-,0[$) and unstable sliding for all $x\in ]0,\eta_+]$ (i.e., $Y_{\lambda_0}^+(x,0)>0$ and $Y_{\lambda_0}^-(x,0)<0$ for $x\in]0,\eta_+]$). Moreover, we assume that the Filippov sliding vector field $X_\lambda^{sl}$ given by \eqref{FSVF}
is positive for $x\in [\eta_-,\eta_+]\setminus\{0\}$ and $\lambda=\lambda_0$.
\smallskip

Assumption A implies that $Y_{\lambda_0}^\pm(0)=0$ and the origin $z=0$ is therefore a tangency point (see Section \ref{section2general}). We assume that $z=0$ for $\lambda=\lambda_0$ is a two-fold singularity. Moreover, we suppose that the two-fold singularity is visible from ``above" and invisible from ``below", i.e., the orbit of $Z_{\lambda_0}^+$ through $z=0$ is contained within $y>0$ near $z=0$, and the orbit of $Z_{\lambda_0}^-$ through $z=0$ is not contained within $y<0$ (Section \ref{section2general}).
\\
\\
\textbf{Assumption B.} We assume that the origin $z=0$ in the PWS system \eqref{zZpminvariance} is a visible-invisible two-fold for $\lambda=\lambda_0$: $Y_{\lambda_0}^\pm(0)=0$ and 
 \begin{align}\label{twofoldcond}
  \begin{cases}
  X_{\lambda_0}^+ (0)&>0,\\ 
  \partial_xY_{\lambda_0}^+(0)&> 0,
  \end{cases}\quad  \begin{cases}
  X_{\lambda_0}^- (0)&< 0,\\ 
  \partial_x Y_{\lambda_0}^-(0)&<0.
  \end{cases}
  \end{align}
  Additionally, we assume that $(\det Z_{\lambda_0})'(0)<0$ where $\det Z_\lambda$ is defined in \eqref{FSVF}.

\begin{remark}
From \eqref{twofoldcond} it follows that $\partial_x (Y_{\lambda_0}^--Y_{\lambda_0}^+)(0)<0$. This, together with $(\det Z_{\lambda_0})'(0)<0$ and \eqref{removablesing}, implies that $X_{\lambda_0}^{sl}(0)>0$.  Thus, $X_{\lambda_0}^{sl}(x)>0$ for all $x\in [\eta_-,\eta_+]$ (see Assumption A).
\end{remark}

Assumption B and The Implicit Function Theorem imply the existence of smooth $\lambda$-families of fold singularities $z_+=(x_+(\lambda),0)$ from above and fold singularities $z_-=(x_-(\lambda),0)$ from below, for $\lambda\sim \lambda_0$, with $x_\pm(\lambda_0)=0$. The following assumption deals with non-zero velocity of the collision between $z_+$ and $z_-$ for $\lambda=\lambda_0$ at the origin $z=0$:
$$ x_+'(\lambda_0)-x_-'(\lambda_0)=\left(-\frac{\partial_{\lambda} Y_{\lambda_0}^+}{\partial_x Y_{\lambda_0}^+}+\frac{\partial_\lambda Y_{\lambda_0}^-}{\partial_x Y_{\lambda_0}^-}\right)(0)\ne 0$$
where $\partial_{\lambda} Y_{\lambda_0}^\pm$ means the partial derivative of $Y_{\lambda}^\pm$ w.r.t. $\lambda$, computed in $\lambda=\lambda_0$.
\\
\\
\textbf{Assumption C.} We assume that 
\begin{equation}
    \label{NZvelocity}
    \partial_\lambda Y_{\lambda}^- \partial_x Y_{\lambda}^+\ne \partial_\lambda Y_{\lambda}^+ \partial_x Y_{\lambda}^- 
\end{equation}
at $(z,\lambda)=(0,\lambda_0)$.
\smallskip

We consider a regularized PWS system \eqref{regul-gener} with $h_\lambda(x,y)=y$:
\begin{align}\label{regul-normal}
  \dot z &=\phi(y\epsilon^{-2}) Z_\lambda^+(z) + (1-\phi(y\epsilon^{-2}))Z_\lambda^-(z)
 \end{align}
where $0<\epsilon\ll 1$ and $\phi:\mathbb{R}\to \mathbb{R}$ is a smooth regularization function that satisfies the assumptions given after \eqref{regul-gener}. More precisely, we have  \\
 \\
 \textbf{Assumption D.} We suppose that $\phi$ satisfies \eqref{strictly-mono} and \eqref{phi-asim} and that $\phi$ is smooth at $\pm \infty$.

\smallskip

It is more convenient to write $\epsilon^{-2}$ in \eqref{regul-normal} instead of $\epsilon^{-1}$ so that we can directly use results from \cite{RHKK} (see Section \ref{section-proofs}).

\begin{figure}[htb]
	\begin{center}
		\includegraphics[width=8.5cm,height=3.9cm]{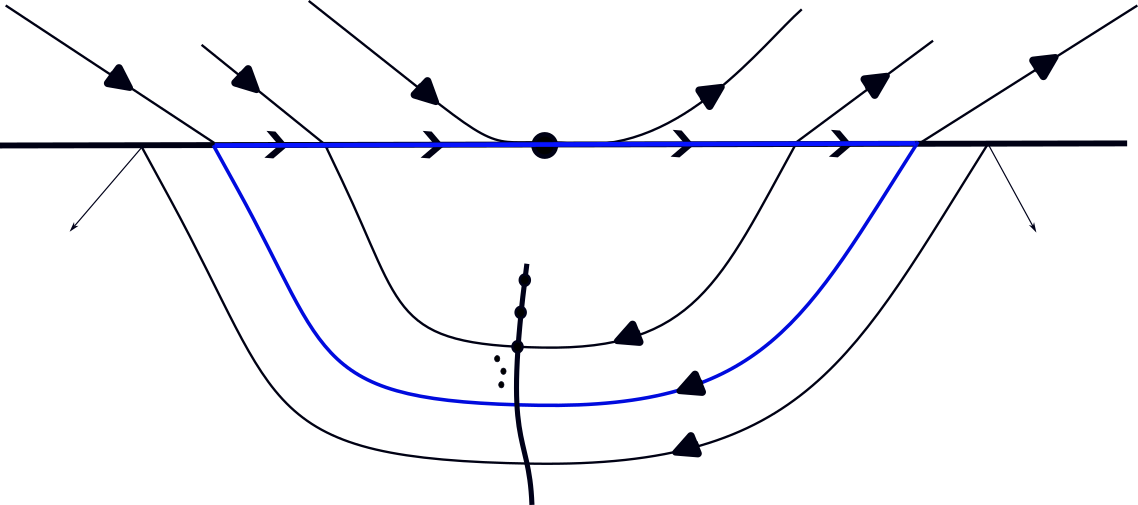}
		{\footnotesize
        \put(-140,51){$s_0$}
        \put(-141,43){$s_1$}
        \put(-142,37){$s_2$}
        \put(-174,36){$\Gamma_0$}
        \put(-137,11){$s$}
        \put(-127,-3){$S$}
        \put(-242,52){$x=\psi_-(s)$}
        \put(-35,52){$x=\psi_+(s)$}
}
         \end{center}
	\caption{A fractal sequence $(s_n)_{n\in\mathbb N}$ near the canard cycle $\Gamma_0$.}
	\label{fig-LPS}
\end{figure}
 \smallskip

Let $S$ be an open section transversally
cutting orbits of $Z_\lambda^-$, parametrized by a regular
parameter $s\sim 0$ (Fig. \ref{fig-LPS}). We assume that $s$ increases as we appraoch the origin $z=0$. For $\lambda=\lambda_0$, let $\Gamma_s$ be the limit periodic set consisting of the orbit of $Z_{\lambda_0}^-$ connecting $(\psi_+(s),0)$ and $(\psi_-(s),0)$, and the segment $[\psi_-(s),\psi_+(s)]\subset \{y=0\}$ (Fig. \ref{fig-LPS}). We suppose that $[\psi_-(s),\psi_+(s)]\subset [\eta_-,\eta_+]$ for all $s\sim 0$. In \cite{RHKK} $\Gamma_s$ is called a canard cycle. From the chosen parametrization of $S$ it follows that $\psi_-'(s)>0$ and $\psi_+'(s)<0$.
Following \cite[Section 3]{RHKK}, to study the number of limit cycles of \eqref{regul-normal} produced by $\Gamma_s$ for $(\epsilon,\lambda)\sim (0,\lambda_0)$ one can use the slow divergence integral associated to the segment $[\psi_-(s),\psi_+(s)]$:
\begin{equation}
    \label{SDI1}
    I(s)=\int_{\psi_-(s)}^{\psi_+(s)}\frac{(Y_{\lambda_0}^+-Y_{\lambda_0}^-)^2(x,0)}{-\det Z_{\lambda_0}(x)}\phi'\left(\phi^{-1}\left(\frac{-Y_{\lambda_0}^-}{Y_{\lambda_0}^+-Y_{\lambda_0}^-}(x,0)\right)\right)dx.
\end{equation}
\begin{remark}
In  \eqref{SDI1} we use Definition \ref{def-SDIContact} and Remark \ref{remark-SDI-negative}. Note that $$I(s)=I(m_{\lambda_0})+I(\tilde m_{\lambda_0})$$
where $m_{\lambda_0}=[0,\psi_+(s)]$ and $\tilde m_{\lambda_0}=[\psi_-(s),0]$.
\end{remark}
\begin{remark}\label{remark-results} We suppose that Assumptions A through D are satisfied and 
 write $$\lambda=\lambda_0+\epsilon\widetilde\lambda$$ with $\widetilde \lambda\sim 0$.
We say that the cyclicity of the canard cycle $\Gamma_{0}$ inside \eqref{regul-normal} is bounded by $N\in\mathbb N$ if there exist $\epsilon_0>0$, $\delta_0>0$ and a neighborhood $\mathcal U$ of $0$ in the $\widetilde{\lambda}$-space such that \eqref{regul-normal} has at most $N$
limit cycles, each lying within Hausdorff distance $\delta_0$
of $\Gamma_{0}$, for all $(\epsilon,\widetilde{\lambda})\in ]0,\epsilon_0]\times \mathcal U$. We call the
smallest $N$ with this property the cyclicity of $\Gamma_{0}$ and denote it by $\cycl(\Gamma_0)$. 

If the slow divergence integral $I$ in \eqref{SDI1} has a simple zero at $s=0$, then $\cycl(\Gamma_0)=2$ and, for each small $\epsilon>0$, the $\widetilde\lambda$-family in \eqref{regul-normal} undergoes a saddle-node bifurcation of
limit cycles near $\Gamma_0$ when we vary $\widetilde\lambda\sim 0$. Under the same assumption on $I$, there is a smooth function $\widetilde \lambda=\widetilde\lambda(\epsilon)$, $\widetilde\lambda(0)=0$, such that \eqref{regul-normal} with $\lambda=\lambda_0+\epsilon\widetilde\lambda(\epsilon)$ has a unique (hyperbolic) limit cycle Hausdorff close to $\Gamma_0$ for each small $\epsilon>0$.

If $I$ has a zero of multiplicity $m\ge 1$ at $s=0$, then $\cycl(\Gamma_0)\le m+1$. When $I(0)<0$ (resp. $I(0)>0$), then $\cycl(\Gamma_0)= 1$, and the limit cycle is hyperbolic and attracting (resp. repelling). 

We refer the reader to \cite[Theorem 3.1]{RHKK} and \cite[Remark 3.4]{RHKK}.
\end{remark}

We say that the canard cycle $\Gamma_0$ is balanced if $s=0$ is a zero of $I(s)$ defined in \eqref{SDI1} ($I(0)=0$). If $\Gamma_0$ is balanced, then there exists a unique function $G(s)$ satisfying $G(0)=0$, $G'(0)>0$ and 
\begin{equation}
    \label{SDISlowRelation}
    \int_{\psi_-(s)}^{\psi_+(G(s))}\frac{(Y_{\lambda_0}^+-Y_{\lambda_0}^-)^2(x,0)}{-\det Z_{\lambda_0}(x)}\phi'\left(\phi^{-1}\left(\frac{-Y_{\lambda_0}^-}{Y_{\lambda_0}^+-Y_{\lambda_0}^-}(x,0)\right)\right)dx=0
\end{equation}
for $s\sim 0$. This follows from The Implicit Function Theorem because $I(0)=0$, $\psi_-'(s)>0$, $\psi_+'(s)<0$ and the integrand in \eqref{SDI1} is negative for $x<0$ and positive for $x>0$ (see Assumptions A and D). We call $G$ defined by \eqref{SDISlowRelation} the slow relation function.
\\
\\
 \textbf{Assumption E.} We suppose that $\Gamma_0$ is balanced and that $s=0$ is an isolated zero of $I(s)$, meaning that $s=0$ has a small neighborhood $]-\tilde s,\tilde s[$ ($\tilde s>0$) that does not contain any other zero of $I(s)$.  

 Assumption E implies that $I$ is either negative or positive for $s>0$ ($I$ is continuous). Using the above mentioned property of the integrand in \eqref{SDI1} it can be easily seen that $0<G(s)<s$ for $s>0$ when $I$ is negative and $G(s)>s$ for $s>0$ when $I$ is positive. Let $s_0>0$ be small and fixed. Thus, if $I$ is negative (resp. positive), then the orbit of $s_0$
 \begin{equation}\label{orbit}
     U_0=\{s_0,s_1,s_2,\dots\}
 \end{equation}
defined by $s_{n+1}=G(s_n)$ (resp. $s_{n+1}=G^{-1}(s_n)$), $n\ge 0$, tends monotonically to the fixed point $s=0$ of $G$. We want to study the Minkowski dimension of $U_0$. 
\smallskip

Let us first define the notion of Minkowski (or box) dimension (see \cite{Falconer,tricot} and references therein).
Let $U\subset\mathbb R^N$ be a bounded set. We define the $\delta$-neighborhood of $U$:
$$
U_\delta=\{x\in\mathbb R^N \ | \ d(x,U)\le\delta\},$$ and denote by $|U_\delta|$ the Lebesgue measure of $U_\delta$.
The lower $u$-dimensional  Minkowski content of $U$, for $u\ge0$, is defined by
$$
\mathcal M_*^u(U)=\liminf_{\delta\to 0}\frac{|U_\delta|}{\delta^{N-u}},
$$
and analogously the upper $u$-dimensional Minkowski content $\mathcal M^{*u}(U)$ (we replace $\liminf_{\delta\to0}$ with $\limsup_{\delta\to0}$).
We define lower and upper Minkowski dimensions of $U$:
$$
\underline\dim_BU=\inf\{u\ge0 \ | \ \mathcal M_*^u(U)=0\}, \ \overline\dim_BU=\inf\{u\ge0 \ | \ \mathcal M^{*u}(U)=0\}.
$$
We have $\underline\dim_BU\le \overline\dim_BU$ and, if $\underline\dim_BU=\overline\dim_BU$, we call it the Minkowski dimension of $U$, and denote it by $\dim_BU$.
\smallskip

The upper Minkowski dimension is finitely stable. More precisely, $$\overline\dim_B(U_1\cup U_2)=\max\{\overline\dim_BU_1,\overline\dim_BU_2\}, \ U_1,U_2\subset \mathbb R^N.$$
If $U_1\subset U_2$, then $\underline\dim_BU_1\le \underline\dim_BU_2$ and $\overline\dim_BU_1\le \overline\dim_BU_2$ ($\underline\dim_B$ and $\overline\dim_B$ are monotonic).

Furthermore, if $0<\mathcal M_*^d(U)\leq\mathcal M^{*d}(U)<\infty$ for some $d$, then we say
 that $U$ is Minkowski nondegenerate. In this case we have necessarily that $d=\dim_B U$. 
%If $\Phi:U \subset \mathbb{R}^{N}\rightarrow \mathbb{R}^{N_1}$ is a bi-Lipschitz map (i.e., there exists a constant $\rho>0$ small enough such that
%$\rho\left\|x-y\right\|\leq\left\|\Phi(x)-\Phi(y)\right\| \leq\frac{1}{\rho} \left\|x-y\right\|$,
%for every $x,y\in U$), then $$\underline\dim_{B}U=\underline\dim_{B}\Phi(U), \ \overline\dim_{B}U=\overline\dim_{B}\Phi(U).$$
Recall also that the notion of being Minkowski nondegenerate  is invariant under bi-Lipschitz maps.
Namely, if $\Phi$ is a bi-Lipschitz map and $U$ is Minkowski nondegenerate, then $\Phi(U)$ is also Minkowski nondegenerate (see \cite[Theorem 4.1]{ZuZuR^3}).
\smallskip

We use these properties in Section \ref{sec-proof3}.

Following \cite{EZZ}, $\dim_BU_0$ exists, it is independent of the choice of $s_0>0$ and  can take only the following discrete set of values: $0$, $\frac{1}{2}$, $\frac{2}{3}$, $\frac{3}{4}$,$\dots$, $1$ (see also Theorem \ref{th-EZZ}). The set $U_0$ is defined in \eqref{orbit}.

\subsection{Statement of results}\label{section-statement-fractal}
In this section we consider the family \eqref{regul-normal} that satisfies Assumptions A through E and assume that $\lambda=\lambda_0+\epsilon\widetilde\lambda$ with $\widetilde \lambda\sim 0$. 

\begin{theorem}\label{theorem-1}
Let $s_0>0$ be small and fixed and let $U_0$ be the orbit of $s_0$ defined in \eqref{orbit}. If $\dim_BU_0=0$, then the following statements hold.
\begin{enumerate}
\item ($\lambda$ unbroken) There exists a smooth function $\widetilde\lambda=\widetilde\lambda(\epsilon)$, $\widetilde\lambda(0)=0$, such that \eqref{regul-normal} with $\lambda=\lambda_0+\epsilon\widetilde\lambda(\epsilon)$ has a unique (hyperbolic) limit cycle Hausdorff close to $\Gamma_0$ for each small $\epsilon>0$.
    \item ($\lambda$ broken) We have that  $\cycl(\Gamma_0)=2$ and, for every small $\epsilon>0$, the $\widetilde\lambda$-family \eqref{regul-normal} undergoes a saddle-node bifurcation of limit cycles Hausdorff close to $\Gamma_0$. 
\end{enumerate}
\end{theorem}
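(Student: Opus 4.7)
The plan is to reduce Theorem \ref{theorem-1} to the simple-zero case already treated in Remark \ref{remark-results} (coming from \cite[Theorem 3.1]{RHKK}) by establishing the dictionary between the Minkowski dimension of the orbit $U_0$ and the multiplicity of the zero of $I(s)$ at $s=0$. Specifically, I will show that $\dim_B U_0 = 0$ if and only if $s=0$ is a simple zero of $I$, after which both conclusions of the theorem follow verbatim from Remark \ref{remark-results}.

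The first step transfers information from $I$ to the slow relation function $G$. Set
\begin{equation}\nonumber
    H(a,b) := \int_{\psi_-(a)}^{\psi_+(b)} f_{\lambda_0}(x)\,dx,
\end{equation}
where $f_{\lambda_0}(x)$ denotes the integrand of \eqref{SDI1}. By construction $I(s) = H(s,s)$, while $H(s, G(s)) \equiv 0$ by the defining relation \eqref{SDISlowRelation} of $G$. A mean value computation then yields
\begin{equation}\nonumber
    I(s) = H(s,s) - H(s,G(s)) = \partial_b H(s, \xi(s))\,(s - G(s))
\end{equation}
for some $\xi(s)$ lying between $G(s)$ and $s$. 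Since $\Gamma_0$ is non-degenerate one has $\psi_+(0) > 0$, so $f_{\lambda_0}(\psi_+(0)) > 0$, and combined with $\psi_+'(0) < 0$ this gives $\partial_b H(0,0) = f_{\lambda_0}(\psi_+(0))\,\psi_+'(0) \ne 0$. Therefore $\partial_b H(s,\xi(s))$ is a smooth, strictly non-vanishing function of $s$ near $0$, and hence $I(s)$ and $s - G(s)$ share exactly the same multiplicity of zero at $s=0$. In particular, $I$ has a simple zero at $0$ if and only if $G'(0) \ne 1$, i.e.\ iff $0$ is a hyperbolic fixed point of $G$.

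The second step is to invoke the EZZ-type Theorem \ref{th-EZZ} (from \cite{EZZ}), which gives the precise quantitative correspondence between the local asymptotics of $G$ at its fixed point and $\dim_B U_0$: $\dim_B U_0 = 0$ exactly when $G$ is hyperbolic at $0$ ($G'(0) \ne 1$), whereas when $G - \mathrm{id}$ vanishes at $0$ to some order $k \ge 2$ one has $\dim_B U_0 = (k-1)/k \ge 1/2$. Combining this with Step 1 gives the desired equivalence $\dim_B U_0 = 0 \iff I$ has a simple zero at $s=0$.

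With this equivalence in hand, both statements of Theorem \ref{theorem-1} follow at once from Remark \ref{remark-results} applied under the simple-zero hypothesis: statement (1) is exactly the existence of a smooth canard curve $\widetilde\lambda = \widetilde\lambda(\epsilon)$ along which a unique hyperbolic limit cycle persists Hausdorff close to $\Gamma_0$, and statement (2) is exactly $\cycl(\Gamma_0) = 2$ together with the saddle-node bifurcation of limit cycles under variation of $\widetilde\lambda$. The main obstacle is the multiplicity transfer in Step 1: it relies crucially on the non-degeneracy $\partial_b H(0,0) \ne 0$, which in turn hinges on the fact that the endpoints $\psi_\pm(0)$ lie strictly away from the two-fold point so that the integrand in \eqref{SDI1} does not vanish there — a subtle but essential input provided by the non-degeneracy of the reference canard cycle $\Gamma_0$.
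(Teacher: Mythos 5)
Your proposal is correct and follows essentially the same route as the paper: factor $I(s)$ as a nonvanishing smooth multiple of $s-G(s)$ (the paper does this via splitting the integral at $\psi_+(G(s))$ and the Fundamental Theorem of Calculus, you via a mean-value argument on the auxiliary function $H(a,b)$), invoke Theorem \ref{th-EZZ} to translate $\dim_B U_0=0$ into multiplicity $1$ of the fixed point, and conclude from \cite[Theorem 3.1]{RHKK} and \cite[Remark 3.4]{RHKK} as recorded in Remark \ref{remark-results}. The only (minor) omission is that you do not explicitly treat the case where $U_0$ is generated by $G^{-1}$ rather than $G$ (when $I>0$ for $s>0$), which the paper handles by noting that $G$ and $G^{-1}$ have the same multiplicity at the fixed point.
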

Theorem \ref{theorem-1} will be proved in Section \ref{sec-proofs12}.

\begin{theorem}\label{theorem-2}
 Let $U_0$ be the orbit of $s_0$ defined in \eqref{orbit}, for a small $s_0>0$. If $\dim_BU_0<1$, then $\cycl(\Gamma_0)\le \frac{2-\dim_BU_0}{1-\dim_BU_0}$.  
\end{theorem}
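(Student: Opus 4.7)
The plan is to combine the multiplicity-based cyclicity bound from Remark \ref{remark-results} with the correspondence between the order of vanishing of the slow divergence integral $I$ at $s=0$ and the Minkowski dimension of the orbit $U_0$. Concretely, I aim to show that $\dim_B U_0 < 1$ forces $I$ to have an isolated zero at $s=0$ of some finite multiplicity $m \ge 1$, with $\dim_B U_0 = 1 - 1/m$. Remark \ref{remark-results} then yields $\cycl(\Gamma_0) \le m+1$, and the arithmetic identity
\begin{equation*}
m+1 \;=\; 1 + \frac{1}{1 - \dim_B U_0} \;=\; \frac{2 - \dim_B U_0}{1 - \dim_B U_0}
\end{equation*}
gives the claimed bound.

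The first key step is to relate the multiplicity of $I$ at $s=0$ to the order of contact of the slow relation function $G$ with the identity at $s=0$. Denote by $f(x)$ the integrand appearing in \eqref{SDI1}; by Assumptions A and D the composition $f \circ \psi_+$ is smooth and strictly positive near $s=0$, and $\psi_+'(0) \ne 0$. Using \eqref{SDISlowRelation} I can rewrite
\begin{equation*}
I(s) \;=\; -\int_{\psi_+(G(s))}^{\psi_+(s)} f(x)\,dx,
\end{equation*}
and a Taylor expansion then gives $I(s) = f(\psi_+(s))\,\psi_+'(s)\,(s - G(s)) + O\bigl((s - G(s))^2\bigr)$, so that $I(s)$ and $G(s) - s$ share the same (finite or infinite) order of vanishing at $s = 0$.

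I would then invoke the forthcoming Theorem \ref{th-EZZ}, which together with the stated discrete spectrum $\{0, \tfrac{1}{2}, \tfrac{2}{3}, \dots, 1\}$ for $\dim_B U_0$ translates the order of contact $G(s) - s = c\, s^m + o(s^m)$, $c \ne 0$, into $\dim_B U_0 = 1 - 1/m$, with $m=1$ corresponding to geometric-type convergence and giving the value $0$. The accumulation value $1$ is reserved for the infinite-order case. Since the hypothesis $\dim_B U_0 < 1$ excludes infinite order, the previous step shows that $I$ has a zero of finite multiplicity $m = 1/(1 - \dim_B U_0)$ at $s=0$, and Remark \ref{remark-results} closes the argument.

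The main obstacle is ensuring that Theorem \ref{th-EZZ} applies uniformly to both possible iteration regimes, namely $s_{n+1} = G(s_n)$ when $I(s) < 0$ for $s > 0$ and $s_{n+1} = G^{-1}(s_n)$ when $I(s) > 0$. This is handled by noting that $G$ is a smooth diffeomorphism fixing $0$ with $G'(0) > 0$, so the displacement $G^{-1}(s) - s$ has the same order of vanishing at $s=0$ as $G(s) - s$; consequently the two iteration regimes produce orbits with the same Minkowski dimension and the argument is symmetric in the sign of $I$.
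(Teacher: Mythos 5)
Your proposal is correct and follows essentially the same route as the paper: factor $I(s)$ as a nonvanishing smooth function times $s-G(s)$ via \eqref{SDISlowRelation}, transfer the multiplicity $m=\frac{1}{1-\dim_BU_0}$ from Theorem \ref{th-EZZ} (covering the $G$ and $G^{-1}$ regimes identically), and conclude with the bound $\cycl(\Gamma_0)\le m+1$ from Remark \ref{remark-results}. The only blemish is a harmless sign slip: with the paper's orientation one has $I(s)=+\int_{\psi_+(G(s))}^{\psi_+(s)}\Phi(x)\,dx$, not the negative of it, though this does not affect the order-of-vanishing comparison.
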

Theorem \ref{theorem-2} will be proved in Section \ref{sec-proofs12}.

\begin{theorem}\label{theorem-3}
Let $U_0$ be the orbit of $s_0$ defined in \eqref{orbit}, for a small $s_0>0$, and $\dim_BU_0=0$. The following statements are true.
\begin{enumerate}
\item For $\widetilde\lambda=\widetilde\lambda(\epsilon)$ given in Theorem \ref{theorem-1}.1 and for each small $\epsilon>0$, the Minkowski dimension of any spiral trajectory accumulating (in forward or backward time) on the unique limit cycle of \eqref{regul-normal} near $\Gamma_0$ is equal to $1$.
    \item For each small $\epsilon>0$, the Minkowski dimension of any spiral trajectory accumulating (in forward or backward time) on the limit cycle of multiplicity $2$ of \eqref{regul-normal}, born in a saddle-node bifurcation of limit cycles Hausdorff close to $\Gamma_0$, is equal to $\frac{3}{2}$ and moreover,  the spiral is Minkowski nondegenerate. 
\end{enumerate}
\end{theorem}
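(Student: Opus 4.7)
\medskip

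\noindent\emph{Proof proposal for Theorem \ref{theorem-3}.} The plan is to reduce both statements to well-established results on Minkowski dimension of spiral trajectories accumulating on a limit cycle of a smooth planar vector field, expressed in terms of the multiplicity of the fixed point of the Poincar\'e first-return map. Once $\epsilon>0$ is fixed, the regularized system \eqref{regul-normal} is $C^\infty$ in a neighborhood of $\Gamma_0$, and any limit cycle produced by Theorem \ref{theorem-1} is a smooth closed orbit. Picking a smooth transversal section $\Sigma$ through the cycle at a point $s^*$, we obtain a smooth Poincar\'e map $P_\epsilon:(\Sigma,s^*)\to(\Sigma,s^*)$, and the fractal analysis of the spiral reduces to the asymptotic behavior of iterates of $P_\epsilon$ together with the standard ``add one to the dimension of the orbit'' passage from the section to the plane.

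For Statement 1, Theorem \ref{theorem-1}.1 delivers, along the curve $\widetilde\lambda=\widetilde\lambda(\epsilon)$, a unique hyperbolic limit cycle Hausdorff close to $\Gamma_0$. Hyperbolicity gives $P_\epsilon'(s^*)\ne 1$, so the Poincar\'e iterates of any initial point on the accumulating side converge geometrically to $s^*$. The orbit of $P_\epsilon$ is then a geometrically decreasing sequence, whose Minkowski dimension is $0$, and the classical computation (see for instance \cite{BoxRenato,BoxVlatko,tricot} and references therein) then shows that the spiral in $\mathbb R^2$ accumulating on the smooth curve $L_\epsilon$ has $\dim_B=1$: the $\delta$-neighborhood of the spiral differs from the $\delta$-neighborhood of the smooth limit cycle by a set of upper two-dimensional content that vanishes faster than $\delta$.

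For Statement 2, Theorem \ref{theorem-1}.2 identifies, for every small $\epsilon>0$, a saddle-node bifurcation value of $\widetilde\lambda$ at which \eqref{regul-normal} has a limit cycle of multiplicity exactly $2$. On $\Sigma$ this reads $P_\epsilon(s)-s=a_\epsilon(s-s^*)^2+O((s-s^*)^3)$ with $a_\epsilon\ne 0$, so the Poincar\'e iterates on the attracting side behave like $|s_n-s^*|\sim c\,n^{-1}$. This is precisely the setting in which the corresponding planar spiral has Minkowski dimension $\tfrac{3}{2}$ and is Minkowski nondegenerate; see \cite{BoxRenato,BoxDomagoj,BoxVlatko,BoxNovo,EZZ,zubzup05}. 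Nondegeneracy of the set of Poincar\'e returns transfers to the full spiral through the bi-Lipschitz flow map on a fundamental annulus together with the bi-Lipschitz invariance of Minkowski nondegeneracy recorded in Section \ref{model+assumptions}.

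The main obstacle is verifying that the multiplicity of the fixed point $s^*$ of the genuine Poincar\'e map $P_\epsilon$ of \eqref{regul-normal} matches the multiplicity predicted by the slow divergence integral analysis of $I(s)$; that is, one must rule out higher-order $\epsilon$-corrections that could change the order of vanishing of $P_\epsilon-\mathrm{id}$ at $s^*$. This is the content of \cite[Theorem 3.1]{RHKK} (invoked in Remark \ref{remark-results}): the displacement function of \eqref{regul-normal} near $\Gamma_0$ is, up to a smooth nonvanishing factor, a controlled perturbation of $I$ that preserves the order of its zero. Combining this with the bijective correspondence between $\dim_BU_0$ and the multiplicity of the zero of $I$ at $s=0$ (Section \ref{section-proofs}), $\dim_BU_0=0$ forces a simple zero of $I$, hence a hyperbolic cycle in case 1 and a multiplicity-two cycle at the saddle-node value in case 2, yielding both statements.
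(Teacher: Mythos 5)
Your proposal is correct and follows essentially the same route as the paper: both reduce the claim to the known results of Žubrinić--Županović on spirals accumulating on hyperbolic (dimension $1$) and multiplicity-two (dimension $\tfrac{3}{2}$, nondegenerate) limit cycles, with hyperbolicity/multiplicity supplied by Theorem \ref{theorem-1}, and both obtain nondegeneracy by transporting the Poincar\'e-return orbit $U$ (of dimension $\tfrac12$) through bi-Lipschitz flow boxes. The only cosmetic difference is that the paper explicitly splits $\Gamma=\tilde\Gamma\cup\bar\Gamma$ and notes $\mathcal M^{3/2}(\tilde\Gamma)=0$ to dispose of the finite-length initial arc, a one-line point your write-up leaves implicit.
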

Theorem \ref{theorem-3} will be proved in Section \ref{sec-proof3}. A small (Hausdorff) neighborhood of $\Gamma_0$ in which we consider spiral trajectories in Theorem \ref{theorem-3}.1 or Theorem \ref{theorem-3}.2 does not shrink to $\Gamma_0$ as $\epsilon\to 0$ (see Section \ref{sec-proof3}).

\section{Proof of Theorems \ref{theorem-1}--\ref{theorem-3}}\label{section-proofs}

\subsection{Proof of Theorems \ref{theorem-1}--\ref{theorem-2}} \label{sec-proofs12}
Let $\tilde s>0$ be small and fixed. Suppose that $F$ is a smooth function on $[0,\tilde s[$, $F(0)=0$ and $0<F(s)<s$ for all $s\in ]0,\tilde s[$. We define $H(s):=s-F(s)$ and the orbit of $s_0\in ]0,\tilde s[$ by $H$: $$U:=\{s_n=H^n(s_0) \ | \ n=0,1,\dots\}$$
where $H^n$ denotes $n$-fold composition of $H$. It is clear that $s_n$ tends monotonically to zero. We say that the multiplicity of the fixed point $s=0$ of $H$ is equal to $m$ if $s=0$ is a zero of multiplicity $m$ of $F$ ($F(0)=\cdots=F^{(m-1)}(0)=0$ and $F^{(m)}(0)\ne 0$). If $F^{(n)}(0)=0$ for each $n=0,1,\dots$, then we say that the multiplicity of $s=0$ of $H$ is $\infty$.
\begin{theorem}[\cite{EZZ}]\label{th-EZZ}
 Let $F$ be a smooth function on $[0,\tilde s[$, $F(0)=0$ and $0<F(s)<s$ for each $s\in ]0,\tilde s[$. Let $H=id-F$ and let $U$ be the orbit of $s_0\in ]0,\tilde s[$ by $H$. Then $\dim_BU$ is independent of the initial point $s_0$ and, for $1\le m\le \infty$, the following bijective correspondence holds:
 \begin{equation}\label{bijective}
     m=\frac{1}{1-\dim_BU}
 \end{equation}
 where $m$ is the multiplicity of $s=0$ of $H$ (if $m=\infty$, then $\dim_BU=1$).
\end{theorem}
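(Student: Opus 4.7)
The plan is to reduce Theorem~\ref{th-EZZ} to an asymptotic analysis of the orbit $s_n=H^n(s_0)$ as $n\to\infty$, and then to invoke the standard formula relating the Minkowski dimension of a strictly decreasing null sequence to its rate of decay. Both ingredients are classical in fractal analysis and appear in the references cited around the statement.

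First I would determine the tail asymptotics of $s_n$ according to $m$. If $m=1$, a Taylor expansion gives $F(s)=F'(0)s+o(s)$ with $F'(0)\in\,]0,1[$ (the strict bounds follow from $0<F(s)<s$), so $H$ is a contraction near $0$ and $s_n\sim C(1-F'(0))^n$ decays geometrically. If $2\le m<\infty$, then $F(s)=cs^m+o(s^m)$ with some $c>0$, and the classical substitution $u_n:=s_n^{-(m-1)}$ yields $u_{n+1}-u_n\to c(m-1)$, so that $s_n\sim\bigl(c(m-1)n\bigr)^{-1/(m-1)}$. In the flat case $m=\infty$, I would compare $F$ from above by $Cs^k$ on a small right neighborhood of $0$ (valid for every $k$ by flatness) and transfer the finite-$m$ estimate to the resulting majorant recurrence, concluding that $s_n$ decays no faster than $n^{-1/(k-1)}$ for every $k\ge 2$.

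Next I would apply the known Minkowski-dimension formula for monotone null sequences: for $t_n\sim An^{-\alpha}$ with $\alpha>0$ one has $\dim_B\{t_n\}=1/(1+\alpha)$, while sequences with geometric decay have $\dim_B=0$. Substituting $\alpha=1/(m-1)$ gives $\dim_B U=(m-1)/m$, which rearranges to $m=1/(1-\dim_B U)$; the boundary cases $m=1$ and $m=\infty$ match $\dim_B U=0$ and $\dim_B U=1$ respectively. Independence of $s_0$ is immediate: the tail asymptotics of $s_n$ depend only on $F$, and discarding finitely many initial terms does not affect $\dim_B$.

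The hard part will be the dimension computation for $\{An^{-\alpha}(1+o(1))\}$ itself. The upper bound follows by splitting the sequence at the threshold $n^\ast(\delta)\sim\delta^{-1/(1+\alpha)}$ where consecutive gaps drop below $\delta$: the ``head'' $\{t_n:n\le n^\ast\}$ is covered by $\sim n^\ast$ individual $\delta$-intervals, while the ``tail'' is absorbed into $[0,t_{n^\ast}]$, an interval of length $\sim\delta^{\alpha/(1+\alpha)}$ requiring $\sim\delta^{-1/(1+\alpha)}$ additional $\delta$-intervals. A matching packing estimate yields the lower bound, and the $o(1)$ corrections are controlled via the bi-Lipschitz invariance of $\dim_B$ recalled earlier in the paper. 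The $m=\infty$ case is closed by establishing $\dim_B U\ge(k-1)/k$ for every $k\ge 2$, which follows from the comparison in Step~1 together with monotonicity of $\underline\dim_B$ applied to a suitable tail of $U$.
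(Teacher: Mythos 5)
The paper does not prove this statement at all: it is imported verbatim from the reference \cite{EZZ} (note the citation in the theorem header), and the authors only \emph{use} it in Section \ref{sec-proofs12}. So there is no in-paper proof to compare against; what you have written is essentially a reconstruction of the standard argument from the cited literature, and as an outline it is sound. The reduction to tail asymptotics of $s_n$, the substitution $u_n=s_n^{-(m-1)}$ with Stolz--Ces\`aro giving $s_n\sim (c(m-1)n)^{-1/(m-1)}$ for $2\le m<\infty$, the classical formula $\dim_B\{An^{-\alpha}\}=1/(1+\alpha)$, and the squeeze for the flat case are exactly the right ingredients, and the algebra $\alpha=1/(m-1)\Rightarrow\dim_BU=(m-1)/m\Leftrightarrow m=1/(1-\dim_BU)$ checks out, as does the independence of $s_0$.

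Two points deserve repair. First, in the case $m=1$ the hypotheses $0<F(s)<s$ only give $F'(0)\in\,]0,1]$, not $]0,1[$: the example $F(s)=s-s^2$ has $m=1$, $F'(0)=1$, and then $H(s)=s^2$, so the orbit decays doubly exponentially rather than geometrically and your claimed asymptotic $s_n\sim C(1-F'(0))^n$ is vacuous. The conclusion $\dim_BU=0$ still holds (the decay is only faster), but you should treat $F'(0)=1$ separately rather than exclude it by an incorrect strictness claim. Second, invoking bi-Lipschitz invariance to absorb the $o(1)$ corrections is not immediate, since a bi-Lipschitz map of the ambient line carrying $\{An^{-\alpha}\}$ onto $\{s_n\}$ must be built by hand; the clean route is either to run the covering/packing estimate directly from the two-sided bounds $s_n\asymp n^{-\alpha}$ together with the gap asymptotics $s_n-s_{n+1}=F(s_n)\asymp n^{-(1+\alpha)}$ (which you already have from Step 1), or to construct the piecewise-linear interpolating map and check that its slopes, being ratios of comparable gaps, are bounded above and below. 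With those two fixes, and with the tail-interval estimate $|U_\delta|\gtrsim s_{N(\delta)}$ made explicit for the lower bound in the $m=\infty$ case, your plan closes the theorem.
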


If we denote by $\Phi$ the integrand in \eqref{SDI1} and \eqref{SDISlowRelation}, then we have
$$ I(s)=\int_{\psi_-(s)}^{\psi_+(s)}\Phi(x)dx=\int_{\psi_-(s)}^{\psi_+(G(s))}\Phi(x)dx+\int_{\psi_+(G(s))}^{\psi_+(s)}\Phi(x)dx=\int_{\psi_+(G(s))}^{\psi_+(s)}\Phi(x)dx$$
where in the last step we use \eqref{SDISlowRelation}. From $\psi_+'(s)<0$, $\Phi(x)>0$ for $x>0$ and The Fundamental Theorem of Calculus it follows that there exists a negative smooth function $\Psi(s)$ such that 
$$I(s)=\Psi(s)(s-G(s)).$$
This implies that $s=0$ is a zero of multiplicity $m$ of $I(s)$ if and only if $s=0$ is a zero of multiplicity $m$ of $s-G(s)$.

We will first suppose that the orbit $U_0$ in \eqref{orbit} is generated by the slow relation function $G$. If $\dim_BU_0=0$, then Theorem \ref{th-EZZ}, with $H=G$, implies that  the multiplicity of the fixed point $s=0$ of $G$ is $1$. Thus, we have that $s=0$ is a simple zero of $I$ and Theorem \ref{theorem-1}.1 (resp. Theorem \ref{theorem-1}.2) follows directly from \cite[Theorem 3.1]{RHKK} (resp. \cite[Remark 3.4]{RHKK}). See also Remark \ref{remark-results}. If $\dim_BU_0<1$, then the multiplicity of $s=0$ of $G$ is equal to $\frac{1}{1-\dim_BU_0}$ (see \eqref{bijective}). Thus, $s=0$ is a zero of multiplicity $\frac{1}{1-\dim_BU_0}$ of $I$ and \cite[Remark 3.4]{RHKK}) implies that $$\cycl(\Gamma_0)\le 1+\frac{1}{1-\dim_BU_0}=\frac{2-\dim_BU_0}{1-\dim_BU_0}.$$ This completes the proof of Theorem \ref{theorem-2}.

If $U_0$ is generated by $G^{-1}$, Theorem \ref{theorem-1} and Theorem \ref{theorem-2} can be proved in the same way as above (we use Theorem \ref{th-EZZ} with $H=G^{-1}$ and the fact that $G$ and $G^{-1}$ have the same multiplicity of the fixed point $s=0$).

\subsection{Proof of Theorem \ref{theorem-3}}\label{sec-proof3}
The Minkowski dimension of spiral trajectories accumulating on a hyperbolic or non-hyperbolic limit cycle of planar vector fields (without parameters) has been studied in \cite{zubzup05,ZupZub}. We prove Theorem \ref{theorem-3} for spiral trajectories in a Hausdorff neighborhood of the canard cycle $\Gamma_0$ that does not shrink to $\Gamma_0$ as $\epsilon\to 0$.

We will first prove Theorem \ref{theorem-3}.1. We assume that $\dim_BU_0=0$ and $\widetilde\lambda(\epsilon)$ is given in Theorem \ref{theorem-1}.1. Let $\bar V$ be a fixed neighborhood of $\Gamma_0$. Then the unique limit cycle of \eqref{regul-normal} with $\widetilde\lambda=\widetilde\lambda(\epsilon)$ is located in $\bar V$ for each $\epsilon>0$ small enough (see \cite{RHKK}). For such fixed $\epsilon>0$, let $\Gamma$ be any spiral trajectory in $\bar V$ accumulating on the limit cycle (in the forward time if the limit cycle is attracting or in the backward time if the limit cycle is repelling). We write $\Gamma=\tilde \Gamma\cup \bar\Gamma$ where $\bar\Gamma$ is the part of $\Gamma$ sufficiently close to the limit cycle (we can apply the results of \cite{zubzup05,ZupZub}) and $\tilde \Gamma$ is the rest of $\Gamma$ (of finite length). It is clear that $\dim_B\tilde \Gamma=1$ and $\overline\dim_B \bar\Gamma\ge 1$. 
Since $\underline\dim_B\le \overline\dim_B$, $\underline\dim_B$ is monotonic and $\overline\dim_B$ is finitely stable (see Section \ref{model+assumptions}), we have 
\begin{equation}\label{inequ}
\underline\dim_B \bar\Gamma\le \underline\dim_B (\tilde \Gamma\cup \bar\Gamma) \le \overline\dim_B (\tilde \Gamma\cup \bar\Gamma)=\max\{\overline\dim_B \tilde \Gamma,\overline\dim_B \bar\Gamma\}=\overline\dim_B \bar\Gamma.
\end{equation}
Since the limit cycle is hyperbolic (see Theorem \ref{theorem-1}.1), \cite[Theorem 10]{zubzup05} implies that $\dim_B\bar\Gamma=\underline\dim_B \bar\Gamma=\overline\dim_B \bar\Gamma=1$. Using \eqref{inequ} we obtain $\dim_B\Gamma=1$. This completes the proof of Theorem \ref{theorem-3}.1.

The first part of Theorem \ref{theorem-3}.2 can be proved in
the same way as Theorem \ref{theorem-3}.1. Since the non-hyperbolic limit cycle is generated by a saddle-node bifurcation of limit cycles we have $\dim_B\bar\Gamma=\underline\dim_B \bar\Gamma=\overline\dim_B \bar\Gamma=\frac{3}{2}$ (see \cite[Theorem 10]{zubzup05} and \cite[Theorem 1]{ZupZub}).
To prove the claim about Minkowski nondegeneracy; first observe that $\mathcal{M}^{3/2}({\tilde\Gamma})=0$ since $\dim_B(\tilde\Gamma)=1<3/2$ so that this part does not affect the upper and lower Minkowski content of $\Gamma=\tilde \Gamma\cup \bar\Gamma$; hence, it is enough to show that $\bar\Gamma$ is Minkowski nondegenerate. 
To see this, we observe that $\bar\Gamma$ can be partitioned into finitely many pieces $\bar\Gamma_i$; $i=1,\ldots,k$ such that each $\bar\Gamma_i$ is bi-Lipschitz equivalent to $[0,1[\times U$ by the Lipschitz flow-box Theorem \cite{CALCATERRA20081108}.
Note also that $\dim_BU=1/2$ and it is Minkowski nondegenerate which implies that $[0,1[\times U$ is also Minkowski nondegenerate; see the proof of \cite[Theorem 4(b)]{ZupZub}.
Finally, the finite stability of Minkowski dimension and of Minkowski nondegeneracy now complete the proof exactly as in the proof of \cite[Theorem 4(b)]{ZupZub}.

\section*{Declarations}
 
\textbf{Ethical Approval} \ 
Not applicable.
 \\
\\
 \textbf{Competing interests} \  
The authors declare that they have no conflict of interest.\\
 \\
\textbf{Authors' contributions} \  All authors conceived of the presented idea, developed the theory, performed the computations and
contributed to the final manuscript.  \\ 
\\ 
\textbf{Funding} \
The research of R. Huzak and G. Radunovi\'{c}  was supported by: Croatian Science Foundation (HRZZ) grant
PZS-2019-02-3055 from “Research Cooperability” program funded by the European Social Fund. Additionally, the research of G. Radunovi\'{c} was partially
supported by the HRZZ grant UIP-2017-05-1020.\\
 \\
\textbf{Availability of data and materials}  \
Not applicable.

\bibliographystyle{plain}
\bibliography{bibtex}
\end{document}